\theoremstyle{definition}
\newtheorem{sub}{}[section]
\newtheorem{defn}[sub]{Definition}
\newtheorem{Remark}[sub]{Remark}
\newtheorem{Example}[sub]{Example}
\theoremstyle{plain}
\newtheorem{thm}[sub]{Theorem}
\newtheorem{lem}[sub]{Lemma}
\newtheorem{coro}[sub]{Corollary}
\newtheorem{prop}[sub]{Proposition}
\newcommand{\mcyl}{(\Delta^1)^\sharp}
\newcommand{\Fun}{\mathrm{Fun}}
\newcommand{\mSet}{\mathbf{sSet}^+}
\newcommand{\Hom}{\mathrm{Hom}}
\newcommand{\Map}{\mathrm{Map}}
\newcommand{\coCart}{\mathbf{coCart}}
\newcommand{\innerhorn}{\Lambda^n_k}
\newcommand{\nsimplex}{\Delta^n}
\newcommand{\coslice}{(a/A)^\sharp}
\title{A note on coCartesian fibrations}
\author{Hoang Kim Nguyen}
\date{}
\begin{document}

\begin{titlepage}
\maketitle
\begin{abstract}
We prove properness of (co)Cartesian fibrations as well as a straightening and unstraightening equivalence, which is compatible with cartesian products, when the base is the nerve of a small category.
\end{abstract}

\tableofcontents

\end{titlepage}

\section{Introduction}

\begin{sub}
An essential tool in higher category theory is the theory (co)Cartesian fibrations; the higher categorical analogue of Grothendieck (op)fibrations. By Lurie's straightening equivalence, coCartesian fibrations correspond to functors to the \(\infty\)-category of \(\infty\)-categories and manipulating coCartesian fibrations is often the preferred way to construct such functors.
\end{sub}

\begin{sub}
The first proof of the straightening equivalence was given by Lurie in \cite{lurie} and relies on a comparison with simplicial categories. Lurie then uses the straightening equivalence to prove important fundamental properties of coCartesian fibrations. An efficient streamlined proof has appeared in \cite{short_StrUnStr}. This proof also uses the comparison with simplicial categories but proves the fundamental properties of coCartesian fibrations first and derives the equivalence from there.
\end{sub}

\begin{sub}
In upcoming joint work with Denis-Charles Cisinski we provide a new construction of the \(\infty\)-category of \(\infty\)-categories inspired by the construction of universes in semantic models of type theory. The straightening equivalence is then derived from a property of the universe of \(\infty\)-categories: \emph{directed univalence}. 
\end{sub}

\begin{sub}
The present article serves as backbone to the above mentioned work on directed univalence. Similar to \cite{short_StrUnStr}, we will prove fundamental properties of coCartesian fibrations from scratch using only methods from (marked) simplicial sets. As such, there is some overlap with \cite{short_StrUnStr} and we indicate whenever this is the case. Since the purpose of this article is to lay the technical foundations to prove a straightening theorem, we are careful to prove everything we need without using a straightening theorem.
\end{sub}

\begin{sub}
The main contributions of this article are the following:
\begin{itemize}
	\item A proof that pullback along (co)Cartesian fibrations preserves Joyal equivalences (Theorem \ref{joyalproperness}). This has been proven by Lurie \cite[Corollary 3.3.1.2]{lurie} using the full power of his straightening equivalence. Although we don't need this result, a weaker assertion which is much easier to prove suffices for us (Proposition \ref{ezproper}), we are not aware of a proof of this result using only elementary methods, so we include here.
	\item A proof of the straightening equivalence for nerves of categories (Theorem \ref{rectificationeq}). We can't completely avoid straightening, but this version avoids simplicial categories. This is not the full straightening equivalence as it only applies when the base is the nerve of a small category. The upshot however is that the proof is relatively short and we also prove a compatibility with cartesian products. 
\end{itemize}
\end{sub}

\paragraph{Acknowledgements} The author would like to thank Denis-Charles Cisinski for many useful discussions and for his encouragement. Most of this work has been completed when the author was a member of the \emph{SFB 1085 Higher Invariants} funded by the \emph{Deutsche Forschungsgesellschaft (DFG)}.

\section{Reminder on the coCartesian model structure}

\begin{sub}
We denote $\mathbf{sSet}^+$ the category of marked simplicial sets. It's objects are given by pairs $(K,E_K)$ where $K$ is a simplicial set and $E_K$ is a set of 1-simplices of $K$ containing all the degenerate 1-simplices, called \emph{marked edges}. It's morphisms are maps of simplicial sets preserving the marked edges. In general we will denote a marked simplicial set by \(K^+\).
\end{sub}

\begin{sub}
The forgetful functor
\[
\mathbf{sSet}^+ \to \mathbf{sSet}
\]
has both a left and a right adjoint. We denote the left adjoint by 
\[
(\cdot)^\flat \colon \mathbf{sSet} \to \mathbf{sSet}^+
\]
Given a simplicial set $A$, the marked simplicial set $A^\flat$ has precisely the degenerate 1-simplices marked. The right adjoint will be denoted by
\[
(\cdot)^\sharp \colon \mathbf{sSet}\to \mathbf{sSet}^+
\]
Given a simplicial set $B$, the marked simplicial set $B^\sharp$ has all 1-simplices marked. 
\end{sub}

\begin{sub}
The functor $(\cdot)^\sharp$ has a further right adjoint, denoted by 
\[
\mu \colon \mathbf{sSet}^+\to \mathbf{sSet}
\]
Given a marked simplicial set $A^+$, the simplicial set $\mu (A^+)$ is the simplicial subset of $A$ spanned by the marked edges.
\end{sub}

\begin{defn}\label{markedgenerators}
We define the class of \emph{marked left anodyne extensions} to be the smallest saturated class containing the morphisms
\begin{itemize}
	\item[(A1)] $(\Lambda^n_k)^\flat \to (\Delta^n)^\flat$ for $n \geq 2$ and $0<k<n$,
	\item[(A2)] $J^\flat \to J^\sharp$, 
	\item[(B1)] $\mcyl \times (\Delta^1)^\flat \cup \{0\} \times \mcyl \to \mcyl \times \mcyl$,
	\item[(B2)] $\mcyl \times (\partial \Delta^n)^\flat\cup \{0\} \times (\Delta^n)^\flat \to \mcyl \times (\Delta^n)^\flat$.
\end{itemize}
\end{defn}

\begin{defn}
A map $X^+\to A^+$ is called a \emph{marked left fibration} if it has the right lifting property with respect to the class of marked left anodyne extensions. A marked simplicial set $X^+$ is called \emph{marked left fibrant} if the map $X^+\to \Delta^0$ is a marked left fibration.
\end{defn}

\begin{sub}
There are dual notions of marked \emph{right} anodyne extensions and marked \emph{right} fibrations. The class of marked right anodyne extensions has generators (A1) and (A2) of Definition \ref{markedgenerators} and the classes
\begin{itemize}
	\item[(B1')] $\mcyl \times (\Delta^1)^\flat \cup \{1\} \times \mcyl \to \mcyl \times \mcyl$,
	\item[(B2')] $\mcyl \times (\partial \Delta^n)^\flat\cup \{1\} \times (\Delta^n)^\flat \to \mcyl \times (\Delta^n)^\flat$. 
\end{itemize}
The marked right fibrations have the right lifting property against the marked right anodyne extensions.
\end{sub}

\begin{thm}
Let $A^+$ be a marked simplicial set. Then there is a unique model structure on the category $\mathbf{sSet}^+/A^+$ with 
\begin{enumerate}
	\item cofibrations given by maps whose underlying map of simplicial sets is a monomorphism,
	\item fibrant objects given by marked left fibrations $X^+\to A^+$.
\end{enumerate}
Moreover, the fibrations between fibrant objects are precisely the marked left fibrations.
\end{thm}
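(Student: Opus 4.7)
The plan is to construct this model structure via Cisinski's general theorem on cofibrantly generated model structures built from an exact cylinder, applied to the slice $\mathbf{sSet}^+/A^+$ with the cylinder $\mcyl \times (-)$ and the set $J$ of generators from Definition \ref{markedgenerators}. Since $\mathbf{sSet}^+$ is a presheaf topos (presheaves on $\Delta$ enlarged by an object classifying marked edges), so is $\mathbf{sSet}^+/A^+$; in particular it is locally presentable and the monomorphisms on underlying simplicial sets form a weakly saturated class generated by a small set $I$, namely the boundary inclusions $(\partial \Delta^n)^\flat \hookrightarrow (\Delta^n)^\flat$ together with $(\Delta^1)^\flat \hookrightarrow (\Delta^1)^\sharp$, equipped with all structure maps to $A^+$.

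First I would introduce $\mcyl$-homotopy: two maps $f, g \colon X^+ \to Y^+$ over $A^+$ are homotopic if they extend to a map $\mcyl \times X^+ \to Y^+$ over $A^+$, and I would then define $f$ to be a \emph{weak equivalence} if composition with $f$ induces a bijection on homotopy classes $[Y^+, Z^+] \to [X^+, Z^+]$ for every marked left fibrant $Z^+ \to A^+$. Generators (B1) and (B2) are designed to implement the cylinder axioms: they guarantee that the pushout-product of the endpoint inclusion $\{0\} \hookrightarrow \mcyl$ with any monomorphism lies in the saturation of $J$. Consequently $\mcyl$-homotopy is an equivalence relation on maps into fibrant targets, and weak equivalences satisfy the 2-out-of-3 and retract properties.

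The small object argument provides two factorizations: every map factors as a cofibration followed by a trivial fibration (right lifting against $I$), and as a map in the saturation of $J$ followed by a marked left fibration. Trivial fibrations are automatically weak equivalences by standard $\mcyl$-homotopy lifting, and maps in the saturation of $J$ are weak equivalences by the cylinder property. The main technical obstacle will be to show conversely that every trivial cofibration lies in the saturation of $J$: given such an $i\colon X^+ \to Y^+$, one factors it as $X^+ \to Z^+ \to Y^+$ with $X^+ \to Z^+$ in the saturation of $J$ and $Z^+ \to Y^+$ a marked left fibration; by 2-out-of-3 the latter is both a marked left fibration and a weak equivalence, hence admits a diagonal filler against $i$ built from an $\mcyl$-homotopy to a section, exhibiting $i$ as a retract of its first factor. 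Having verified all axioms, uniqueness of the model structure is automatic from Jeff Smith's recognition principle, and the identification of fibrations between fibrant objects with marked left fibrations follows from the general fact that such fibrations are precisely the maps with right lifting property against the generating trivial cofibrations $J$.
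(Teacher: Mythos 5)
The paper itself does not prove this theorem but delegates it to \cite{contrakim}, whose construction is indeed of the Cisinski type you describe; so your general framework (the cylinder $\mcyl\times(-)$, weak equivalences defined via homotopy classes of maps into marked left fibrant objects, generating cofibrations $(\partial\Delta^n)^\flat\to(\Delta^n)^\flat$ and $(\Delta^1)^\flat\to(\Delta^1)^\sharp$) is the right one. But two of your steps fail as stated. First, the cylinder is one-sided: (B1) and (B2) give that the pushout-product of $\{0\}\hookrightarrow\mcyl$ with a cofibration is marked left anodyne, but the corresponding pushout-products with $\{1\}\hookrightarrow\mcyl$ are \emph{not} marked left anodyne, so the hypotheses of Cisinski's theorem on classes of anodyne extensions (which require both endpoints) are not satisfied, and the sentence ``consequently $\mcyl$-homotopy is an equivalence relation'' does not follow from (B1) and (B2) alone. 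Symmetry and transitivity of the homotopy relation into a fibrant target have to be extracted from (A1) and (A2), i.e.\ from the fact that marked edges of a marked left fibrant object are invertible up to homotopy; this is where the real work lies and your sketch skips it.

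Second, and more seriously, the statement you call the main technical obstacle is false: the trivial cofibrations are \emph{not} the saturation of $J$. Already over $A^+=\Delta^0$, the inclusion $\{1\}\to\mcyl$ is a cofibration and a weak equivalence (both sides are weakly contractible, since $\{0\}\to\mcyl$ is an instance of (B2)), yet it is not marked left anodyne: the cosieve inclusion $\{1\}\to(\Delta^1)^\sharp$ is a marked left fibration admitting no lift against it. Your retract argument breaks exactly here: the marked left fibration $Z^+\to Y^+$ produced by the factorization is a weak equivalence by 2-out-of-3, but when $Y^+$ is not fibrant this does not make it a trivial fibration, so the diagonal filler against $i$ need not exist. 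Relatedly, your final sentence would show that \emph{all} fibrations are marked left fibrations, contradicting the deliberately restricted clause ``fibrations between fibrant objects'' in the statement. The correct route is to obtain the (trivial cofibration, fibration) factorization from accessibility and closure properties of the weak equivalences (Smith's recognition theorem, or Cisinski's direct argument) without identifying the trivial cofibrations explicitly, and then to prove separately --- using that a trivial cofibration with \emph{fibrant} target is marked left anodyne, which is exactly the form in which the paper later invokes \cite{contrakim} --- that the fibrations between fibrant objects are precisely the marked left fibrations.
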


\begin{proof}
See \cite[Theorem 4.29]{contrakim} for this precise statement. An alternative proof of the existence of this model structure when $A^+= A^\sharp$ is in \cite[Proposition 3.1.3.7]{lurie}.
\end{proof}

\begin{defn}
We will call the model structure on \(\mSet/A^+\) the \emph{coCartesian model structure}. We denote its homotopy category by \(\coCart(A^+)\). The dual model structure will be called the \emph{Cartesian model structure} and its homotopy category is denoted by \(\mathbf{Cart}(A^+)\).
\end{defn}

\begin{sub}
A useful property of coCartesian fibrations is that pullback along them preserves \emph{cellular marked right anodyne extensions}, which are those marked right anodyne extensions lying in the saturated class generated by the sets (B1) and (B2) in Definition \ref{markedgenerators}.
\end{sub}

\begin{thm}\label{ezproper}
Consider a pullback square of marked simplicial sets
\[
\begin{tikzcd}
X^+ \rar{j} \dar[swap]{p}& Y^+ \dar{q}\\
A^+\rar{i} & B^+	
\end{tikzcd}
\]
where $p$ and $q$ are marked left fibrations and $i$ is a cellular marked right anodyne extension. Then $j$ is a marked right anodyne extension. The dual statement for marked right fibrations and cellular marked left anodyne extensions also holds.
\end{thm}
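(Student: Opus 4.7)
The plan is to reduce the statement to the two generating families (B1') and (B2') by a saturation argument, and then to treat those generators by a cellular induction whose base case concerns marked left fibrations over $\mcyl$.

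Fix the marked left fibration $q\colon Y^+ \to B^+$ and let $\mathcal{C}_q$ denote the class of monomorphisms $i\colon A^+ \to B^+$ whose pullback under $q$ is a marked right anodyne extension. Since pullback preserves colimits in the locally Cartesian closed category $\mSet$, the class $\mathcal{C}_q$ is closed under pushouts, transfinite compositions, and retracts. Letting $B^+$ and $q$ vary, this reduces the problem to verifying, for each generator $i\colon S \to T$ of type (B1') or (B2') and each marked left fibration $q'\colon Y' \to T$, that the pullback of $i$ under $q'$ is a marked right anodyne extension.

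I would then exploit the observation that both generators are pushout-products of $\{1\} \hookrightarrow \mcyl$ with a monomorphism of marked simplicial sets: (B1') pairs with $(\Delta^1)^\flat \hookrightarrow \mcyl$ and (B2') with $(\partial\Delta^n)^\flat \hookrightarrow (\Delta^n)^\flat$. This motivates an inductive argument whose base case asserts: for any marked left fibration $q'\colon Y' \to \mcyl$, the fiber inclusion $(q')^{-1}(\{1\}) \hookrightarrow Y'$ is a marked right anodyne extension. I would prove this by filtering $Y'$ according to the dimension and the vertex-partition type of the non-degenerate simplices not contained in the fiber over $\{1\}$, identifying each attaching map as a pushout of a generator of type (A1), (A2), or (B2'). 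The higher-$n$ case of (B2') and the case of (B1') then follow by an analogous cellular attachment over the relevant product $\mcyl \times (\Delta^n)^\flat$ or $\mcyl \times \mcyl$, invoking the base case on each cell as it is attached.

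The main obstacle is the combinatorial bookkeeping in the base case. Each non-degenerate simplex of $Y'$ straddling both vertices of $\mcyl$ has to be attached in a way that exhibits it as a pushout of a specific right anodyne generator, with careful attention to how its marked edges interact with the marking of $Y'$. The marked left fibration structure of $q'$ is what makes this recognition possible: it supplies the cocartesian lifts along which one can shear off cells one at a time. Organising the simplices of $Y'$ into a well-founded filtration whose attaching maps all take the required form is where the real work lies; once this is done, the general statement is assembled from it by the saturation argument above.
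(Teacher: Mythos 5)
Your reduction to the generators (B1$'$) and (B2$'$) by saturation is fine (with the usual care that the base must be allowed to vary, which you note), and it is where any proof must begin; note that the paper itself gives no argument here but cites an external reference. The gap is in the core of your proposal: you want to exhibit the pullback of a generator as a \emph{relative cell complex} on the generators, by filtering $Y'$ so that every attachment is an honest pushout of a map of type (A1), (A2), (B1$'$) or (B2$'$). This is impossible already in the smallest nontrivial instance of your base case. Take $q'\colon Y'\to \mcyl$ to be $\Delta^2\to\Delta^1$ induced by $0,1\mapsto 0$, $2\mapsto 1$, with the two coCartesian edges $0\to 2$ and $1\to 2$ marked; this is a marked left fibration, and its fiber over $\{1\}$ is the vertex $2$. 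The only generator that adds a vertex is (B2$'$) with $n=0$, and it adds the new vertex together with a marked edge out of it; since the only marked edges out of $0$ and $1$ are $0\to 2$ and $1\to 2$, any filtration must pass through the subobject spanned by these two marked edges, after which it remains to attach the unmarked edge $0\to 1$ together with the unique nondegenerate $2$-simplex. No generator does this: (A1) with $n=2$ attaches a $2$-simplex together with its $d_1$-face, whereas the missing face here is $d_2$; (B2$'$) with $n=1$ attaches two nondegenerate $2$-simplices; (A2) and (B1$'$) only add markings; and since a pushout along a monomorphism of presheaves adjoins the complementary cells freely, no degenerate attaching map can help. So this fiber inclusion lies in the saturated class only because that class is closed under \emph{retracts}, and your strategy, which never produces a retract, cannot succeed. (Your reduction of the higher generators to the base case over $\mcyl$ is also not an argument as stated, but this is moot given the above.)

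The missing idea is a deformation-retract argument exploiting the \emph{left} lifting data of $q$. For a generator $i\colon S^+\to \mcyl\times C^+$ of type (B1$'$) or (B2$'$) (a pushout-product of $\{1\}\to\mcyl$ with a cofibration), consider the homotopy $\mcyl\times\mcyl\times C^+\to\mcyl\times C^+$ given by $\max$ in the first two coordinates, which contracts $\mcyl$ onto $\{1\}$, is the identity at time $0$, and preserves $S^+$. Lifting it first through the marked left fibration $X^+\to S^+$ and then extending along the cellular marked \emph{left} anodyne inclusion $\{0\}\times Y^+\cup\mcyl\times X^+\to\mcyl\times Y^+$ (against which $q$ has the right lifting property) yields $\tilde H\colon\mcyl\times Y^+\to Y^+$ with $\tilde H_0=\mathrm{id}$, $\tilde H(\mcyl\times X^+)\subseteq X^+$ and $\tilde H_1(Y^+)\subseteq X^+$. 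The standard retract diagram built from $\tilde H$ then exhibits $j\colon X^+\to Y^+$ as a retract of $\{1\}\times Y^+\cup\mcyl\times X^+\to\mcyl\times Y^+$, and this pushout-product of $\{1\}\to\mcyl$ with the cofibration $j$ is cellular marked right anodyne, since pushout-product with $\{1\}\to\mcyl$ carries the generating cofibrations to (B1$'$) and (B2$'$) and hence all cofibrations into their saturation. This handles every generator uniformly and replaces all of the combinatorial bookkeeping you were anticipating.
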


\begin{proof}
See \cite[4.45]{contrakim}.	
\end{proof}

\begin{sub}
One of the main goals of this article is to extend this Theorem to general marked right anodyne extensions, see section \ref{propersection}. The main difficulty is to prove that pulling back along (co)Cartesian fibrations preserves Joyal equivalences. A proof of this fact using straightening/unstraightening can be found in \cite[Appendix B.3]{lurieha}. However, the goal of this note is to prove properties of (co)Cartesian fibrations without straightening/unstraightening and instead (eventually) derive it as a consequence. A proof for left/right fibrations without straightening/unstraightening has appeared in \cite[Proposition 5.3.5]{cisinskibook} and we will make use of this fact in the following Proposition.
\end{sub}

\begin{prop}\label{improvedrightproper}
In the Theorem above, if the underlying map of simplicial sets \(q\colon Y \to B\) is a left (resp. right) fibration and \(i\) is a marked right (resp. left) anodyne extension, then the map \(j\) is a marked right (resp. left) anodyne extension.
\end{prop}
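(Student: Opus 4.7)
The plan is to define the class $\mathcal{P}$ of maps $i\colon A^+ \to B^+$ for which the conclusion of the proposition holds, and to show that it is a saturated class containing the four generating families (A1), (A2), (B1'), (B2') of marked right anodyne extensions from Definition~\ref{markedgenerators}. Saturation is formal: $\mSet$ is a Grothendieck topos so pullback along any map preserves colimits, and the class of fibrations under consideration (marked left fibrations whose underlying map is a left fibration) is stable under pullback; hence $\mathcal{P}$ is closed under pushouts, transfinite composition and retracts. The generators (B1') and (B2') are cellular marked right anodyne extensions, so Theorem~\ref{ezproper} gives them the conclusion without needing the extra hypothesis on $q$ and places them in $\mathcal{P}$.

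The main work is showing (A1) and (A2) are in $\mathcal{P}$. For (A1), $i = (\Lambda^n_k)^\flat \to (\Delta^n)^\flat$ with $0<k<n$, consider a marked left fibration $q\colon Y^+ \to (\Delta^n)^\flat$ with $Y \to \Delta^n$ a left fibration. I first determine the marking on $Y^+$: marked edges must map to degenerate edges of $\Delta^n$, so they lie in fibers of $Y \to \Delta^n$; conversely, each fiber is a Kan complex in which every edge is an equivalence that extends to a $J$-diagram, so the (A2) lifting axiom for $q$ forces all fiber edges to be marked. Hence the marking on $Y^+$ coincides with the set of fiber edges, and the same description holds for the pullback marking on $X^+$ (fiber edges project onto vertices of $\Delta^n$, all of which lie in $\Lambda^n_k$). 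By \cite[Proposition 5.3.5]{cisinskibook}, the underlying inclusion $X = Y \times_{\Delta^n} \Lambda^n_k \hookrightarrow Y$ is a Joyal trivial cofibration; using the structure of the left fibration I would present it as an iterated pushout of inner-horn inclusions $\Lambda^m_j \hookrightarrow \Delta^m$. Since the simplices of $Y$ not lying in $X$ project only onto the top $n$-simplex of $\Delta^n$ or onto its missing face $d_k\Delta^n$, any new edges introduced by an attached cell are non-fiber, hence unmarked in $Y^+$; each attachment therefore lifts to a pushout of the generator (A1).

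For (A2), $i = J^\flat \to J^\sharp$ and $q\colon Y^+ \to J^\sharp$ a marked left fibration with $Y \to J$ a left fibration on underlying, the hypotheses force $Y^+ = Y^\sharp$: every edge of $Y$ lies over an edge of $J^\sharp$ (all marked), is an equivalence in $Y$ (being a coCartesian lift of an equivalence in $J$), hence extends to a map $J \to Y$ and is forced to be marked by the (A2) lifting property. The pullback $X^+$ has only the fiber edges marked, and the map $j$ expands this to all edges of $Y$. For each non-fiber edge $e$ of $Y$, extending $e$ to a map $J \to Y$ and pushing out (A2) along this map marks $e$; iterating transfinitely over the non-fiber edges realises $X^+ \to Y^\sharp$ as a composition of pushouts of (A2), hence as a marked right anodyne extension. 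The dual statement (right fibration / marked left anodyne) follows by the symmetric argument.

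The main obstacle lies in the cell-decomposition step of (A1): to upgrade Cisinski's Joyal-trivial-cofibration statement into an explicit presentation of $X \hookrightarrow Y$ as an iterated inner-horn pushout compatible with the fiber-edge marking. This requires a careful dimension-by-dimension attaching argument exploiting the lifting properties of the left fibration $Y \to \Delta^n$ together with the combinatorics of $\Lambda^n_k \subset \Delta^n$, in order to verify that each attached cell arises from an inner horn and that the markings are correctly accounted for throughout.
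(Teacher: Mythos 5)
Your reduction to the four generating families, and your treatment of (A2), (B1') and (B2'), essentially match the paper: (B1') and (B2') are cellular, so Theorem \ref{ezproper} applies without the extra hypothesis, and over $J^\sharp$ the total space is a Kan complex whose edges are all forced to be marked, so the pullback map is obtained by marking equivalences, i.e.\ is a transfinite composite of pushouts of (A2). Your identification of the marking on a marked left fibration over $(\Delta^n)^\flat$ (marked edges $=$ fibrewise edges $=$ equivalences, since a left fibration is conservative) is also correct and agrees with the paper.

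The problem is case (A1), and it is exactly the step you yourself flag as the main obstacle. Presenting $X=\Lambda^n_k\times_{\Delta^n}Y\hookrightarrow Y$ as an iterated pushout of inner-horn inclusions would amount to proving that the pullback of an inner anodyne map along a left fibration is \emph{inner anodyne} as a cofibration of simplicial sets. That is strictly stronger than what \cite[Proposition 5.3.5]{cisinskibook} gives you (a trivial cofibration of the Joyal model structure), and it cannot be extracted from it: Joyal trivial cofibrations are not known to be retracts of relative inner-horn complexes, and no explicit cell decomposition of this pullback is available; the lifting properties of the left fibration $Y\to\Delta^n$ do not obviously produce one. Since you give no actual attaching argument, the proof is incomplete at its central point. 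The paper circumvents this entirely by working in the marked model structure over the point: the flat-marked comparison maps are marked anodyne (marking equivalences), the underlying map is a Joyal trivial cofibration by Cisinski's result, hence the pullback map is a trivial cofibration with \emph{fibrant} codomain, and \cite[2.31]{contrakim} then says that such a map is marked (left and right) anodyne. That last lemma is precisely the device that makes an explicit cellular presentation unnecessary, and it is what your argument is missing; I would replace the cell-decomposition step by this model-categorical one.
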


\begin{proof}
It suffices to show this when the map \(i\colon A^+\to B^+\) belongs to the class of generators for marked right anodyne extensions.

We first show this for the class (A1). We need to show that for any diagram of pullback squares of the form
\[
\begin{tikzcd}
	Y^+\rar{j} \dar & X^+ \dar\\
	(\innerhorn)^\flat \rar{i} & (\nsimplex)^\flat 
\end{tikzcd}
\]
where the vertical maps are marked left fibrations with underlying map of simplicial sets being left fibrations and $i$ is inner horn inclusion, the map $j$ is a marked right anodyne extension. We observe that the marked simplicial sets $Y^+$ and $X^+$ have precisely the equivalences marked. In particular $X^+$ is fibrant over the point. Since the underlying map of simplicial sets
\[
Y\to X
\]
is a Joyal trivial cofibration by \cite[Proposition 5.3.5]{cisinskibook}, the map 

\[
Y^\flat\to X^\flat
\]
is a trivial cofibration over the point. We have a square
\[
\begin{tikzcd}
Y^\flat\rar \dar & X^\flat\dar\\
Y^+\rar & X^+
\end{tikzcd}
\]
Here, the vertical maps are marked left anodyne, since they are given by marking equivalences and the upper horizontal map is a trivial cofibration. Thus the lower horizontal map is a trivial cofibration over the point. Since $X^+$ is fibrant, this map is in fact marked right anodyne by \cite[2.31]{contrakim}. 

For the class (A2) it suffices to show that for a pullback square
\[
\begin{tikzcd}
Y^+\rar{i} \dar & X^\sharp\dar\\
J^\flat \rar & J^\sharp
\end{tikzcd}
\]
the map $i$ is marked left anodyne. Clearly, the map $i$ is the identity on underlying simplicial sets and $i$ is obtained by marking equivalences, thus is marked left anodyne.

The classes (B1') and (B2') follow from Theorem \ref{ezproper} since they are cellular marked right anodyne.
\end{proof}


\section{Invariance properties of the (co)Cartesian model structure}

\begin{sub}
The goal of this section is to show that for any Joyal equivalence $A\to B$ we obtain a Quillen equivalence of coCartesian (resp. Cartesian) model structures $\mSet/(A^\sharp)\to \mSet/(B^\sharp)$.
\end{sub}

\begin{thm}\label{inner-anodyne-qequivalence}
Let \(i\colon A\to B\) be inner anodyne. Then the induced functor
\[
i_!\colon \mSet(A^\sharp)\to \mSet(B^\sharp)
\]
is a Quillen equivalence for the coCartesian and Cartesian model structures.
\end{thm}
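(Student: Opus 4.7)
The plan is to check that $(i_!, i^*)$ is a Quillen adjunction and then establish a stronger pullback property implying Quillen equivalence. The pushforward $i_!$ (composition with $i^\sharp\colon A^\sharp \to B^\sharp$) preserves cofibrations, as monomorphisms are closed under composition; and the pullback $i^*$ preserves marked left fibrations because these are defined by an RLP that is stable under pullback. Hence $(i_!, i^*)$ is a Quillen adjunction, and the Cartesian case is symmetric.

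The heart of the proof will be to establish the following property $(*)$ of $i$: for every marked left fibration $Y^+ \to B^\sharp$, the pullback inclusion $(A \times_B Y)^+ \hookrightarrow Y^+$ is marked left anodyne. Property $(*)$ immediately shows that the derived counit $i_! i^* Y^+ \to Y^+$ is a weak equivalence for every fibrant $Y^+$ over $B^\sharp$. Combined with the fact that $i^* i_! = \mathrm{id}$ on the nose (since $i^\sharp$ is a monomorphism) and standard model-categorical arguments, this implies the Quillen equivalence.

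To establish $(*)$, I would show the class of monomorphisms satisfying it is saturated. Closure under retracts and composition is direct. For closure under pushouts, if $A \to B$ satisfies $(*)$ and $A \to A'$ is any map with pushout $B' = B \cup_A A'$, then for a marked left fibration $Y^+ \to B'^\sharp$ the decomposition $Y = Y|_B \cup_{Y|_A} Y|_{A'}$ expresses the pullback inclusion $Y|_{A'} \hookrightarrow Y$ as the pushout of the marked left anodyne inclusion $Y|_A \hookrightarrow Y|_B$, hence marked left anodyne. Thus it suffices to verify $(*)$ for inner horn inclusions $i\colon \Lambda^n_k \hookrightarrow \Delta^n$, $0 < k < n$.

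For such an $i$ and a coCartesian fibration $Y \to \Delta^n$ with $Y^+$ marked by coCartesian edges, I would filter the inclusion $(Y|_{\Lambda^n_k})^+ \hookrightarrow Y^+$ by adjoining the non-degenerate simplices of $Y$ lying over $d_k \Delta^n$ and over $\Delta^n$, ordered by dimension together with a chosen section of the coCartesian lifts. The coCartesian lifting property of $Y \to \Delta^n$ and the marking by coCartesian edges should identify each attaching map with a generating marked left anodyne extension of type (A1), (B1), or (B2) from Definition~\ref{markedgenerators}. The main obstacle is this combinatorial cellular decomposition: the order of attachment must be chosen so that the marked edges required by the (B1) or (B2) generators are already present when those cells are attached, and each attaching map must be identified with a specific generator rather than merely an underlying inner horn. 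The Cartesian case follows by the dual argument using (A1), (A2), (B1'), (B2').
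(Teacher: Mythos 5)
Your reduction to a single property $(*)$ inverts the logical order of the paper, and the inversion is where the proof goes missing. The statement that for a marked left fibration $Y^+\to(\Delta^n)^\sharp$ the inclusion $(Y|_{\Lambda^n_k})^+\hookrightarrow Y^+$ is marked left anodyne is precisely Corollary \ref{stability-of-inner-anodyne} of the paper, which is \emph{deduced from} Theorem \ref{inner-anodyne-qequivalence}, not used to prove it. Your plan to establish it by filtering $Y$ by the nondegenerate simplices lying over $\Delta^n$ and $d_k\Delta^n$ and matching each attaching map to a generator (A1), (B1), (B2) is exactly the step you flag as ``the main obstacle,'' and it is not a routine obstacle: no such direct cellular decomposition is known, and the existing elementary treatments all avoid it. The paper's actual argument sidesteps the combinatorics entirely: it reduces to showing that a marked left anodyne map $X^\natural\to Y^\natural$ between coCartesian fibrations over $A^\sharp\to B^\sharp$ induces equivalences on fibers over each point $a$, which it does by resolving the point by a cellular marked right anodyne map into a marked right fibration $E^\sharp\to B^\sharp$ and then invoking Theorem \ref{ezproper} and Proposition \ref{improvedrightproper} together with 2-out-of-3. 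Since $i$ is bijective on $0$-simplices, $\mathbf{R}i^*$ is conservative, so fully faithfulness of $\mathbf{L}i_!$ (i.e.\ the derived unit being an isomorphism, which is what the fiber computation gives) suffices. Without an actual proof of $(*)$ for inner horns, your argument has no base case.

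There is also a secondary logical slip in the deduction you propose even granting $(*)$. Property $(*)$ makes the derived \emph{counit} an isomorphism, which says that $\mathbf{R}i^*$ is fully faithful; to conclude a Quillen equivalence you then need either the derived unit to be an isomorphism or $\mathbf{L}i_!$ to reflect weak equivalences. The identity $i^*i_!=\mathrm{id}$ holds only before deriving: the derived unit is $X^+\to i^*(R\,i_!X^+)$ for a fibrant replacement $R$ over $B^\sharp$, and $i^*$, being right Quillen, is not known to preserve the trivial cofibration $i_!X^+\to R\,i_!X^+$. Likewise $i_!$ reflecting weak equivalences (a map over $A^\sharp$ being an equivalence over $A^\sharp$ because it is one over $B^\sharp$) is not automatic. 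This half of the argument is repairable using fiberwise detection of equivalences between fibrant objects and the bijectivity of $i$ on objects, but it is not supplied by ``standard model-categorical arguments'' as stated; note that the paper deliberately works with the unit and the conservativity of $\mathbf{R}i^*$, which is the pairing that closes cleanly.
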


\begin{proof}
We only prove the coCartesian case as the Cartesian case is analogous. It is clear that \(i_!\) is left Quillen, thus it suffices to show that the left derived functor \(\mathbf Li_!\) is an equivalence of categories. Since \(i\) is a bijection on objects, the right derived functor \(\mathbf Ri^*\) is conservative, hence it suffices to show that \(\mathbf Li_!\) is fully faithful. 

Consider a commutative square
\[
\begin{tikzcd}
X^\natural \rar{j} \dar[swap]{p} & Y^\natural\dar{q}\\
A^\sharp \rar{i} & B^\sharp
\end{tikzcd}
\]
in which \(p\) and \(q\) are coCartesian fibrations and \(j\) is marked left anodyne. We prove that the induced map on fibers is an equivalence (in the (co)Cartesian model structure over the point). Consider a point \(a\colon \Delta^0\to A\). Choose a commutative square
\[
\begin{tikzcd}
\Delta^0\rar{u} \dar[swap] & E^\sharp\dar{f}\\
A^\sharp \rar & B^\sharp
\end{tikzcd}
\]
in which \(u\) is cellular marked right anodyne and \(f\) is a marked right fibration and denote \(F^\sharp := A^\sharp \times_{B^\sharp}E^\sharp\). Note that the induced map \(\Delta^0\to D^\sharp\) is cellular marked right anodyne by \cite[Theorem 5.2.14]{cisinskibook} and \(i\) is a marked left anodyne extension. It follows again from \cite[Theorem 5.2.14]{cisinskibook} that the pullback square
\[
\begin{tikzcd}
F^\sharp\rar\dar & E^\sharp\dar\\
A^\sharp \rar & B^\sharp
\end{tikzcd}
\]
has horizontal arrows cellular marked right fibrations and vertical arrows marked right fibrations. Pulling back the maps \(p\) and \(q\) along this pullback square, we obtain a commutative diagram of pullback squares
\[
\begin{tikzcd}
X^\natural_a \rar \dar & Y^\natural_a\dar\\
X^\natural_F\rar \dar & Y^\natural_E \dar\\
X^\natural \rar & Y^\natural
\end{tikzcd}
\]
Here we define
\[
X^\natural_F:= F^\sharp\times_{A^\sharp} X^\natural,\quad Y^\natural_E:= E^\sharp\times_{B^\sharp} Y^\natural
\]
The maps
\[
X^\natural_F\to X^\natural, \quad Y^\natural_E \to Y^\natural
\]
are marked right fibrations whose underlying map of simplicial sets are right fibrations. Since \(X^\natural \to Y^\natural\) was assumed to be marked left anodyne it follows from Proposition \ref{improvedrightproper} that
\[
X^\natural_F\to Y^\natural_E
\]
is marked left anodyne. Also by Theorem \ref{ezproper} the maps
\[
X^\natural_a \to X^\natural_F,\quad Y^\natural_a \to Y^\natural_E
\]
are marked right anodyne extensions. Since marked right anodyne and marked left anodyne extensions are in particular weak equivalences in the (co)Cartesian model structure over the point, it follows by 2-out-of-3 that the map
\[
X^\natural_a \to Y^\natural_a
\]
is an equivalence. This shows that the derived unit is an isomorphism and hence \(\mathbf Li_!\) is fully faithful.
\end{proof}

\begin{coro}\label{stability-of-inner-anodyne}
Let
\[
\begin{tikzcd}
Y^\natural \rar{j} \dar & X^\natural\dar\\
A^\sharp\rar{i} & B^\sharp
\end{tikzcd}
\]
be a pullback square of marked simplicial sets. Suppose \(i\) is inner anodyne and \(X^\natural \to B^\sharp\) is a marked left (resp. right) fibration. Then the map \(j\) is a marked left (resp. right) anodyne extension.
\end{coro}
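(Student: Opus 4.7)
The plan is to apply the Quillen equivalence of Theorem \ref{inner-anodyne-qequivalence} to deduce that \(j\) is a weak equivalence in the coCartesian model structure on \(\mSet/B^\sharp\), and then to upgrade this to ``marked left anodyne'' via a retract argument. The left case is treated below; the right case is dual, using the Cartesian model structure on \(\mSet/B^\sharp\).

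First I would observe that \(j\) is a monomorphism: \(i\) is inner anodyne hence a monomorphism, and monomorphisms are stable under pullback. Thus \(j\) is a cofibration in the coCartesian model structure on \(\mSet/B^\sharp\), and its target \(X^\natural\) is fibrant there by hypothesis.

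Second, I would identify \(j\) as the counit of the Quillen adjunction \(i_! \dashv i^*\) at \(X^\natural\). Indeed, \(i^* X^\natural = Y^\natural\) as an object of \(\mSet/A^\sharp\), while \(i_! i^* X^\natural\) is \(Y^\natural\) viewed as an object of \(\mSet/B^\sharp\) via the composite \(Y^\natural \to A^\sharp \xrightarrow{i} B^\sharp\), with counit morphism equal to \(j\). Since every object is cofibrant and \(X^\natural\) is fibrant, Theorem \ref{inner-anodyne-qequivalence} forces \(j\) to be a weak equivalence in \(\mSet/B^\sharp\).

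Third, I would factor \(j = \beta \circ \alpha\) by the small object argument applied to the generators of Definition \ref{markedgenerators}, producing a marked left anodyne extension \(\alpha : Y^\natural \to Z^\natural\) and a marked left fibration \(\beta : Z^\natural \to X^\natural\). The composite \(Z^\natural \to X^\natural \to B^\sharp\) is then a composition of marked left fibrations, so \(Z^\natural\) is fibrant in \(\mSet/B^\sharp\), and \(\beta\) is a fibration between fibrant objects there. A standard retract argument (factor \(\alpha\) as trivial cofibration followed by fibration in \(\mSet/B^\sharp\); the fibrant target forces the second factor to be a marked left fibration against which \(\alpha\) lifts) shows that \(\alpha\) is also a weak equivalence in \(\mSet/B^\sharp\). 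By 2-out-of-3, \(\beta\) is a weak equivalence, hence a trivial fibration in \(\mSet/B^\sharp\); in particular it has the right lifting property against the cofibration \(j\). The resulting lift \(s : X^\natural \to Z^\natural\) exhibits \(j\) as a retract of \(\alpha\), and since marked left anodyne extensions form a saturated class, \(j\) is marked left anodyne.

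The main obstacle is identifying \(j\) with the derived counit and obtaining the weak equivalence conclusion from the Quillen equivalence; the closing retract manipulation is routine once this is in hand.
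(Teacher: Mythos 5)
Your argument is correct and takes essentially the same route as the paper: both deduce from Theorem \ref{inner-anodyne-qequivalence} that \(j\) is a trivial cofibration over \(B^\sharp\) with fibrant target (your identification of \(j\) with the counit of \(i_!\dashv i^*\) makes this precise), and then upgrade this to a marked left anodyne extension. The only difference is cosmetic: the paper cites \cite[2.31]{contrakim} for the final step, which is exactly the factor-and-retract argument you write out by hand, and it first reduces to inner horn inclusions, a reduction your direct application of the theorem to an arbitrary inner anodyne \(i\) renders unnecessary.
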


\begin{proof}
We show this for marked left fibrations. Since marked left anodyne extensions are saturated, it suffices to show this for inner horn inclusions. By the previous Theorem \ref{inner-anodyne-qequivalence}, the functor 
\[
i^\ast \colon \mSet/(\Delta^n)^\sharp \to \mSet /(\Lambda_k^n)^\sharp
\]
is a Quillen equivalence for the coCartesian model structures. Hence the map \(j\) is a trivial cofibration over \(B^\sharp\) with fibrant target, thus by \cite[2.31]{contrakim} a marked left anodyne extension.
\end{proof}

\begin{coro}
Let \(i\colon A\to B\) be a Joyal equivalence. Then the induced functor
\[
i_!\colon \mSet/A^\sharp\to \mSet/B^\sharp
\]
is a Quillen equivalence when both categories are endowed with the (co)Cartesian model structure.
\end{coro}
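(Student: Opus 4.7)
The functor $i_!$ is always left Quillen: it preserves monomorphisms tautologically, and sends marked left anodyne extensions to marked left anodyne extensions, since the latter class is defined without reference to the base. To promote this to a Quillen equivalence, the plan is to reduce, via two-out-of-three for Quillen equivalences together with Theorem \ref{inner-anodyne-qequivalence}, to a single case that can be handled directly.

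Concretely, I would first choose an inner anodyne map $b \colon B \to B'$ into a quasi-category, and factor the composite $b \circ i \colon A \to B'$ by the small object argument as $A \xrightarrow{j} C \xrightarrow{p} B'$ with $j$ inner anodyne and $p$ an inner fibration; then $C$ is automatically a quasi-category. Two-out-of-three for Joyal equivalences (applied to $b$, $i$, $j$) forces $p$ to be a Joyal equivalence as well. From $b \circ i = p \circ j$ we obtain $b_! \circ i_! = p_! \circ j_!$, and since $b_!$ and $j_!$ are Quillen equivalences by Theorem \ref{inner-anodyne-qequivalence}, two-out-of-three for Quillen equivalences reduces the problem to showing that $p_!$ is a Quillen equivalence. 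Thus one may assume $i$ is an inner fibration between quasi-categories which is also a Joyal equivalence.

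At this point I would invoke Joyal's classical characterization: such an $i$ is automatically a trivial fibration of simplicial sets, and in particular admits a section together with a compatible deformation retract over $B$. From this structure one verifies directly that the unit and counit of $i_! \dashv i^*$ are weak equivalences in the coCartesian model structure: each is essentially a base-change of a trivial fibration, and the deformation retract supplies the marked homotopies needed to recognize these maps as weak equivalences in the marked setting. The main obstacle I anticipate is this final verification --- cleanly converting a trivial fibration of underlying simplicial sets into a weak equivalence in the marked coCartesian slice model category requires careful bookkeeping of markings, likely invoking Theorem \ref{ezproper} to transport the deformation retract through the relevant marked fibrations.
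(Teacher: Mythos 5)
Your reduction contains a genuine gap at the step where you claim that an inner fibration between quasi-categories which is a Joyal equivalence is automatically a trivial fibration. This is false: the inclusion $\Delta^0\to J$ of an object into the walking isomorphism is an inner fibration between quasi-categories (for $n\geq 2$ an $n$-simplex of a nerve is determined by its spine, which lies in any inner horn, so the lifting problems have solutions) and it is a Joyal equivalence, yet it is not even surjective on vertices. Joyal's characterization applies to \emph{isofibrations} (equivalently, Joyal fibrations between quasi-categories), and the small object argument applied to the inner horn inclusions only produces an inner fibration, not an isofibration. So after your (correct) two-out-of-three reduction you are left with a map $p$ on which your final step cannot be run: the class of Joyal trivial cofibrations is strictly larger than the class of inner anodyne maps, and this discrepancy is exactly what your factorization fails to absorb.

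The paper closes this gap by invoking \cite[3.6.2]{cisinskibook}: the class $W$ of maps for which $i_!$ is a Quillen equivalence contains all Joyal equivalences as soon as it is closed under two-out-of-three, contains the inner anodyne extensions (Theorem \ref{inner-anodyne-qequivalence}), and contains the trivial fibrations. The third ingredient is the one your reduction is missing; for an honest trivial fibration the section/deformation-retract argument you sketch at the end is the right idea, but to reduce an arbitrary Joyal equivalence to the two classes ``inner anodyne'' and ``trivial fibration'' you need the quoted lemma (or an equivalent device), not the factorization into inner anodyne followed by inner fibration.
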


\begin{proof}
Let \(W\) be the class of morphisms for which $i_!$ is a Quillen equivalence. We want to show that the class $W$ contains the Joyal equivalences. According to \cite[3.6.2]{cisinskibook} it suffices to show that 
\begin{enumerate}
	\item $W$ is closed under 2-out-of-3,
	\item $W$ contains the inner anodyne extensions
	\item $W$ contains the trivial fibrations.
\end{enumerate} 
The first assertion follows from the fact that Quillen equivalences are closed under 2-out-of-3. The second assertion follows from the previous Theorem and the third assertion is clear.
\end{proof}

\begin{Remark}
A proof along similar lines has appeared as \cite[Theorem 5.15]{short_StrUnStr}.
\end{Remark}


\section{Properness of (co)Cartesian fibrations}\label{propersection}

\begin{sub}
In this section we prove that the pullback of an inner anodyne map along a (co)Cartesian fibration is a Joyal equivalence. This has first appeared in the literature in \cite[Proposition 3.3.1.3]{lurie}, but the proof uses the straightening/unstraightening equivalence. Our proof will only use elementary properties of (locally) coCartesian fibrations. Together with Theorem \ref{ezproper}, this generalizes Proposition \ref{improvedrightproper}.
\end{sub}

\begin{sub}
Recall that a morphism of simplicial sets $i\colon A\to B$ is called \emph{final}, if for any morphism $f\colon B\to C$, the induced morphism $i\colon (A,fi)\to (B,f)$ in the slice $\mathbf{sSet}/C$ is a Contravariant equivalence. A monomorphism is final if and only if it is right anodyne, see \cite[Corollary 4.1.9]{cisinskibook}.

Furthermore recall that a morphism $p\colon X\to Y$ is called \emph{proper}, if for any diagram
\[
\begin{tikzcd}
	A'\rar{i'}\dar & B'\rar \dar & X\dar{p}\\
	A\rar{i} & B\rar & Y
\end{tikzcd}
\]
in which the squares are pullbacks and the map $i$ is final, it follows that $i'$ is final. Examples of proper morphisms are left fibrations, see \cite[Proposition 4.4.11]{cisinskibook} and coCartesian fibrations, see \cite[Corollary 4.46]{contrakim} (or \cite[Proposition 4.1.2.15]{lurie} using straightening).
\end{sub}

\begin{thm}
Let $p\colon X\to Y$ be an inner fibration of $\infty$-categories. Then $p$ is proper if and only if in any diagram
\[
\begin{tikzcd}
	A'\rar{i'}\dar & B'\rar \dar{p'} & X\dar{p}\\
	\{1\}\rar & \Delta^1\rar & Y
\end{tikzcd}
\]
in which the squares are pullbacks, the map $i'$ is final.
\end{thm}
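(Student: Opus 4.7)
The forward direction is immediate: the inclusion $\{1\}\hookrightarrow\Delta^1$ is the right anodyne generator $\Lambda^1_1\hookrightarrow\Delta^1$, hence a final monomorphism by \cite[Corollary 4.1.9]{cisinskibook}, and so properness of $p$ forces $i'$ to be final.

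For the converse, one may first reduce to checking pullback along final monomorphisms (any final map factors as a monomorphism followed by a trivial fibration, and trivial fibrations are pullback-stable and final, so a 2-out-of-3 argument reduces to the monomorphism case). Introduce the class $\mathcal{R}$ of monomorphisms $j\colon A\hookrightarrow B$ with the property that for every map $B\to Y$ the pullback $j'\colon A\times_Y X\to B\times_Y X$ is final (equivalently, right anodyne, since it is a monomorphism). Because right anodyne maps form a saturated class and pullback along the fixed $p$ commutes with the relevant colimits, $\mathcal{R}$ is itself saturated. The hypothesis places $\{1\}\hookrightarrow\Delta^1\in\mathcal{R}$, so it remains to show that $\mathcal{R}$ contains the other right anodyne generators $\Lambda^n_k\hookrightarrow\Delta^n$ for $0<k\le n$ with $n\ge 2$. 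Base-changing $p$ along $\sigma\colon\Delta^n\to Y$ yields an inner fibration $p_\sigma\colon Z\to\Delta^n$ which still satisfies the edge hypothesis (every edge of $\Delta^n$ factors through an edge of $Y$), so the task reduces to the intrinsic claim: if $q\colon Z\to\Delta^n$ is an inner fibration such that the inclusion of the target-fiber is right anodyne for the pullback of $q$ along every edge of $\Delta^n$, then the pullback of $\Lambda^n_k\hookrightarrow\Delta^n$ along $q$ is right anodyne for every $0<k\le n$.

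The central inductive claim is that, under these hypotheses, the fiber inclusion $Z_n\hookrightarrow Z$ is right anodyne. Proceed by induction on $n$; the base case $n=1$ is the hypothesis itself, and for $n\ge 2$ each face $d_i\hookrightarrow\Delta^n$ with $i<n$ is a copy of $\Delta^{n-1}$ whose top vertex is $n$, so the inductive hypothesis applied to $q|_{d_i}$ yields $Z_n\hookrightarrow Z\times_{\Delta^n}d_i$ right anodyne. Iterating across the faces $d_0,\ldots,d_{n-1}$ that constitute $\Lambda^n_n=\partial\Delta^{n-1}\star\{n\}$ produces $Z_n\hookrightarrow Z\times_{\Delta^n}\Lambda^n_n$ right anodyne; extending this to $Z_n\hookrightarrow Z$ requires recognizing the addition of the missing face $d_n$ and the top $n$-simplex as right anodyne extensions, obtained via inner-fibration lifts together with the edge hypothesis on the edges $\{i,n\}$. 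Once $Z_n\hookrightarrow Z$ has been established as right anodyne, the statement for a general horn $\Lambda^n_k$ (which always contains the vertex $n$) follows by 2-out-of-3: both $Z_n\hookrightarrow Z$ and $Z_n\hookrightarrow Z\times_{\Delta^n}\Lambda^n_k$ are right anodyne, whence $Z\times_{\Delta^n}\Lambda^n_k\hookrightarrow Z$ is final. The main obstacle is precisely the induction for $Z_n\hookrightarrow Z$: the extension from $\Lambda^n_n$ to all of $\Delta^n$ must be organized so that the inner-fibration property and the edge hypothesis combine cleanly to handle the cellular attachments involving the face $d_n$, which does not contain the vertex $n$ and thus cannot be directly controlled by the edge hypothesis.
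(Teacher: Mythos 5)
The paper does not actually prove this statement; it cites \cite[Theorem 4.4.36]{cisinskibook}, so the only question is whether your argument stands on its own. Your easy direction and your general framework are fine: reducing finality to final monomorphisms (via factorization through a trivial fibration and cancellation properties of final maps), observing that the class $\mathcal{R}$ of monomorphisms whose base change along $p$ is always right anodyne is saturated (because $(-)\times_Y X$ preserves monomorphisms and colimits and right anodyne maps form a saturated class), and thereby reducing to the horn generators $\Lambda^n_k\to\Delta^n$, $0<k\le n$, over a simplex $\Delta^n\to Y$. All of that is correct.

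However, the proof is not complete, and the missing piece is exactly where the entire difficulty of the theorem lives. You explicitly defer the step that extends $Z_n\hookrightarrow Z\times_{\Delta^n}\Lambda^n_n$ to $Z_n\hookrightarrow Z$, i.e.\ the attachment of the face $d_n$ (which does not contain the vertex $n$) and of the nondegenerate $n$-simplices; calling this ``the main obstacle'' and leaving it unorganized means the central inductive claim is unproven. Two further steps are also asserted rather than argued: (i) ``iterating across the faces $d_0,\dots,d_{n-1}$'' to conclude that $Z_n\hookrightarrow Z\times_{\Delta^n}\Lambda^n_n$ is right anodyne is not a formal consequence of each $Z_n\hookrightarrow Z\times_{\Delta^n}d_i$ being right anodyne --- one must control the inclusions $Z\times_{\Delta^n}(d_0\cup\cdots\cup d_j)\hookrightarrow Z\times_{\Delta^n}(d_0\cup\cdots\cup d_{j+1})$, whose base involves intersections of faces, so the induction has to be run over a finer skeletal filtration; (ii) for $k<n$ the horn $\Lambda^n_k$ contains the face $d_n$, so the claim that $Z_n\hookrightarrow Z\times_{\Delta^n}\Lambda^n_k$ is right anodyne runs into the very same unresolved difficulty, and cannot simply be quoted in the final 2-out-of-3 step. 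This is why Cisinski's actual proof is routed differently: he first establishes that an inner fibration is proper as soon as, for every $n$ and every $\Delta^n\to Y$, the last-vertex fiber inclusion $X_n\to\Delta^n\times_Y X$ is final (a statement about how final monomorphisms are generated by the inclusions $\{n\}\to\Delta^n$ together with cancellation, which sidesteps the horns entirely), and only then reduces $n$ arbitrary to $n=1$ using the inner fibration structure. I would recommend either importing that characterization of properness or carrying out the cellular induction in full; as written, the argument has a genuine gap.
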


\begin{proof}
This is \cite[4.4.36]{cisinskibook}.
\end{proof}

\begin{coro}\label{locoprop}
Locally coCartesian fibrations between $\infty$-categories are proper.
\end{coro}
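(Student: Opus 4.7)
The plan is to apply the preceding theorem, which characterizes properness for an inner fibration by a criterion that only needs to be checked after pulling back to $\Delta^1$. This is tailor-made for locally coCartesian fibrations, since by definition their pullback along any map $\Delta^1 \to Y$ is already an honest coCartesian fibration, so the hypothesis ``locally coCartesian'' can be upgraded to ``coCartesian'' precisely on the nose where the criterion requires it.

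Concretely, I would start with a diagram of pullback squares
\[
\begin{tikzcd}
A'\rar{i'}\dar & B'\rar \dar{p'} & X\dar{p}\\
\{1\}\rar & \Delta^1\rar & Y
\end{tikzcd}
\]
and note that by assumption $p'\colon B'\to \Delta^1$ is a coCartesian fibration. Then I would invoke the fact already recorded in the section preamble that coCartesian fibrations are proper (via \cite[Corollary 4.46]{contrakim}). Since the inclusion $\{1\}\hookrightarrow \Delta^1$ is right anodyne, hence final by \cite[Corollary 4.1.9]{cisinskibook}, pulling it back along the proper map $p'$ yields that $i'\colon A'\to B'$ is again final. By the preceding theorem this is enough to conclude that $p$ itself is proper.

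There is no real obstacle here: the entire content of the corollary is that the characterization from the preceding theorem interacts well with the definition of a locally coCartesian fibration, and the work has already been done, both in the criterion of \cite[4.4.36]{cisinskibook} and in the known properness of coCartesian fibrations. The proof is therefore essentially a one-line reduction.
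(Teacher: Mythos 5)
Your proof is correct and follows exactly the paper's argument: the local coCartesian hypothesis makes the pullback $p'$ over $\Delta^1$ an honest coCartesian fibration, whence properness of $p'$ gives finality of $i'$, and the criterion of the preceding theorem applies. The paper's own proof is the same one-line reduction, just stated more tersely.
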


\begin{proof}
In the diagram of the theorem, if $p$ is locally coCartesian then the pullback $p'$ is a coCartesian fibration. Since coCartesian fibrations are proper, the assertion follows.
\end{proof}

\begin{lem}\label{joyalleftfib}
Suppose we have a diagram
\[
\begin{tikzcd}
	X\arrow{rr}{i}\drar[swap]{p} & & Y\dlar{q}\\
	& S &
\end{tikzcd}
\]
in which $p$ is a left fibration, $i$ is a trivial cofibration of the Joyal model structure and $q$ is a Joyal fibration. Then $q$ is also a left fibration.
\end{lem}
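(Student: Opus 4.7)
The plan is to verify directly that $q$ has the right lifting property against every left anodyne cofibration $A\hookrightarrow B$. The idea is to transport each such problem to $p$ via a retraction of $i$, solve it there using that $p$ is a left fibration, and then correct the discrepancy by a homotopy living in the fibers of $q$.

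First I would produce a strict retraction $r\colon Y\to X$ by solving the lifting problem with $\mathrm{id}_X$ on top and $q$ on the bottom, which admits a solution because $i$ is a Joyal trivial cofibration and $p$ is a Joyal fibration; this gives $ri=\mathrm{id}_X$ and $pr=q$. Since $X$ and $Y$ are both fibrant in the Joyal slice model structure on $\mathbf{sSet}/S$, the maps $i$ and $r$ are mutually inverse homotopy equivalences. Using the cylinder $Y\times J$ with structure map $q\circ\mathrm{pr}_1$, where $J$ denotes the nerve of the walking isomorphism (so that $Y\times J\to Y$ is a Joyal equivalence by cartesianness of the Joyal structure), one obtains a homotopy $H\colon Y\times J\to Y$ over $S$ with $H|_{Y\times\{0\}}=ir$ and $H|_{Y\times\{1\}}=\mathrm{id}_Y$.

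Given a lifting problem $f\colon A\to Y$, $g\colon B\to S$ with $qf=g|_A$ and $A\hookrightarrow B$ left anodyne, I would solve the transported problem $rf\colon A\to X$ over $g$ against $p$ to obtain $h\colon B\to X$ with $h|_A=rf$ and $ph=g$. Then $ih\colon B\to Y$ lies over $g$ but restricts to $irf$ on $A$ rather than $f$; however, $H\circ(f\times\mathrm{id}_J)\colon A\times J\to Y$ is a homotopy over $g|_A$ connecting $irf$ to $f$. Gluing along $A\times\{0\}$ yields a map $B\times\{0\}\cup A\times J\to Y$ lying over $g\circ\mathrm{pr}_1\colon B\times J\to S$.

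Finally, $B\times\{0\}\cup A\times J\hookrightarrow B\times J$ is the pushout-product of the cofibration $A\hookrightarrow B$ with the Joyal trivial cofibration $\{0\}\hookrightarrow J$, hence itself a Joyal trivial cofibration by cartesianness of the Joyal structure. Since $q$ is a Joyal fibration we extend to $K\colon B\times J\to Y$ over $S$, and $K|_{B\times\{1\}}\colon B\to Y$ is the required lift. The main obstacle to watch for is ensuring that the over-$S$ homotopy $H$ really exists and that the relevant pushout-product truly lies in the class of Joyal trivial cofibrations; both reduce to standard facts about the Joyal model structure being cartesian together with $\{0\}\hookrightarrow J$ being a Joyal trivial cofibration.
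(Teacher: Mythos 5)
Your proof is correct, but it takes a genuinely different route from the paper. The paper factors $q$ as a left anodyne map $j\colon Y\to Z$ followed by a left fibration $r\colon Z\to S$, notes that $ji$ is cofinal (Joyal equivalences and left anodyne maps both are), deduces that $ji$ is a covariant equivalence between the left fibrations $p$ and $r$ and hence a Joyal equivalence, concludes by 2-out-of-3 that $j$ is a Joyal trivial cofibration, and then applies the retract lemma to exhibit $q$ as a retract of $r$. You instead verify the right lifting property of $q$ against left anodyne maps directly, by transporting each lifting problem to $p$ along a strict retraction of $i$ and correcting the resulting lift by a fiberwise homotopy, extended using the pushout-product of $A\hookrightarrow B$ with $\{0\}\hookrightarrow J$. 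Both arguments are valid. The paper's proof is shorter once one has the covariant model structure machinery (in particular, that covariant equivalences between left fibrations over $S$ are Joyal equivalences); yours is more elementary, needing only that the Joyal model structure is cartesian and that left fibrations lift against Joyal trivial cofibrations. That last point is the one step you state without justification ("$p$ is a Joyal fibration"): it is true, and it follows from exactly the facts the paper recalls just before the lemma --- a Joyal trivial cofibration is a cofinal monomorphism, hence left anodyne, so the left fibration $p$ lifts against $i$ --- but it should be said, since the lemma's whole point is that "left fibration" and "Joyal fibration" interact nontrivially. With that remark added, your argument is complete.
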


\begin{proof}
Choose a factorization
\[
\begin{tikzcd}
	Y\arrow{rr}{j}\drar[swap]{q} & & Z\dlar{r}\\
	& S &
\end{tikzcd}
\]
where $j$ is left anodyne and $r$ is a left fibration. Since Joyal equivalences are cofinal \cite[Proposition 5.3.1]{cisinskibook}, the composition $ji$ is cofinal. Since this determines a Covariant equivalence between the left fibrations $p$ and $r$, it follows that $ji$ is in fact a Joyal equivalence. Since $i$ was assumed to be a Joyal equivalence, it follows that $j$ is also a Joyal equivalence. By the Retract Lemma, the map $q$ is a retract of $r$ and thus a left fibration.
\end{proof}

\begin{lem}\label{locolem}
Consider the commutative diagram
\[
\begin{tikzcd}
E^\natural \dar[swap]{e} & \\
X^\natural\dar[swap]{p}\rar{j} & Y^\natural\dar{q}\\
(\Lambda^n_k)^\sharp\rar{i} & (\Delta^n)^\sharp
\end{tikzcd}
\]
in which the lower square is a pullback, each vertical arrow is a marked left fibration and the map of simplicial sets $i$ is inner anodyne. Assume furthermore that the underlying map of $e$ is a left fibration of simplicial sets. Choose a factorization
\[
\begin{tikzcd}
E^\natural \dar[swap]{e}\rar{k} & F^\natural\dar{f}\\
X^\natural\rar{j} & Y^\natural
\end{tikzcd}
\]
where $k$ is marked left anodyne and $f$ is a marked left fibration. Then the underlying map of $f$ is a locally coCartesian fibration. Moreover, for each 0-simplex $x\colon \Delta^0 \to X$, the induced map on fibers $E_x\to F_x$ is a Joyal equivalence.
\end{lem}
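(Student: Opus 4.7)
The plan is to establish the locally coCartesian property of $f$ first, then to deduce the fiber equivalence using Corollary~\ref{locoprop} and properness. Two preliminary observations will be used throughout: the composite $F^\natural \to Y^\natural \to (\Delta^n)^\sharp$ is a marked left fibration, so its underlying map $F \to \Delta^n$ is a coCartesian fibration; and by Corollary~\ref{stability-of-inner-anodyne}, the map $j\colon X^\natural \to Y^\natural$ is itself marked left anodyne.

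To prove the locally coCartesian property, I would check edge by edge that for each $\alpha\colon \Delta^1 \to Y$ the pullback $F \times_Y \Delta^1 \to \Delta^1$ is a coCartesian fibration. When $\alpha$ is a marked edge of $Y^\natural$, this is automatic: pulling back $f$ along $(\Delta^1)^\sharp \to Y^\natural$ yields a marked left fibration over $(\Delta^1)^\sharp$, whose underlying map is coCartesian. For a general edge $\alpha$ with image $\beta$ in $\Delta^n$, I would use the coCartesian structure of $q$ to construct a $2$-simplex $\sigma\colon \Delta^2 \to Y$ with $d_2\sigma$ a $q$-coCartesian lift $\phi$ of $\beta$ at $\alpha(0)$, with $d_1\sigma = \alpha$, and with $d_0\sigma$ sitting inside a fiber $Y_m$ of $q$. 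For any $z_0\in F$ over $\alpha(0)$, one lifts $\phi$ to a marked edge $\tilde\phi$ in $F$ (using that $f$ is a marked left fibration) and invokes the $f$-coCartesian universal property of $\tilde\phi$ to extend to a $2$-simplex $\tilde\sigma$ in $F$ lifting $\sigma$; the face $d_1\tilde\sigma$ is then the desired locally $f$-coCartesian lift of $\alpha$ at $z_0$. The delicate part is producing the fiberwise face $d_0\tilde\sigma$ inside $F$, which will rely on the marked left anodyne property of $k$ together with the underlying left fibration structure of $e$.

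For the fiber equivalence, once $f$ is locally coCartesian, Corollary~\ref{locoprop} will give that $f$ is proper, and $e$ is proper as a left fibration. Using both properness assertions, the marked left anodyne $k$, which is a weak equivalence in the coCartesian model structure over $Y^\natural$, pulls back along $\{x\}\hookrightarrow Y$ to a weak equivalence $E_x \to F_x$ in the coCartesian model structure over $\Delta^0$, i.e., a Joyal equivalence of $\infty$-categories.

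The main obstacle will be the local coCartesian property over non-marked edges of $Y$: there the marked left fibration property of $f$ alone does not directly produce the needed coCartesian lifts, and the argument must carefully weave together the coCartesian structure on $q\colon Y\to\Delta^n$ and the underlying left fibration structure of $e$ through the anodyne extensions built into $k$.
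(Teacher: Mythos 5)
Your plan defers precisely the step that carries all the content of the lemma. In your edge-by-edge construction of locally coCartesian lifts, the candidate lift \(d_1\tilde\sigma\) is coCartesian in \(F\times_Y\Delta^1\to\Delta^1\) only if the remaining face \(d_0\tilde\sigma\), which lives in a fiber \(F_m\) over a vertex \(m\) of \(\Delta^n\), is (at least) \(f_m\)-coCartesian for \(f_m\colon F_m\to Y_m\). So what you actually need is that each \(f_m\) is a left fibration (then every edge of \(F_m\) works and the verification closes, e.g.\ via \cite[2.4.2.11]{lurie}). This is the heart of the proof, and it does not follow from a generic appeal to ``the marked left anodyne property of \(k\) together with the underlying left fibration structure of \(e\)''. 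The argument the paper uses is: by Theorem \ref{inner-anodyne-qequivalence} the base-change map \(E^\natural\to(\Lambda^n_k)^\sharp\times_{(\Delta^n)^\sharp}F^\natural\) is marked left anodyne, hence \(k_m\colon E_m\to F_m\) is a Joyal trivial cofibration over \(X_m\cong Y_m\); since \(e_m\) is a left fibration and \(f_m\) is an isofibration (the equivalences of \(Y^\natural\) are marked), Lemma \ref{joyalleftfib} exhibits \(f_m\) as a retract of a left fibration, hence a left fibration. Your sketch never produces this input, so the ``delicate part'' is a genuine gap, not a routine verification.

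The second half is also not correct as stated. Properness of \(e\) and of \(f\) (via Corollary \ref{locoprop}) concerns preservation of final maps under pullback; it gives no mechanism for restricting the marked left anodyne map \(k\) to fibers over a point of \(Y\) while retaining an equivalence, because neither \(E\to Y\) nor \(F\to Y\) is a left fibration or a coCartesian fibration, so there is no general principle that an equivalence over \(Y\) between them is detected fiberwise (note also that the underlying simplicial map of a marked left anodyne extension need not be left anodyne, nor cofinal). The correct argument again runs through the fibers over \(\Delta^n\): once \(e_m\) and \(f_m\) are both known to be left fibrations over \(X_m\) and \(k_m\) is a Joyal, hence covariant, equivalence between them, it is a fiberwise equivalence, which yields the Joyal equivalence \(E_x\to F_x\) for every \(x\in X_m\). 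In short, both halves of the lemma hinge on the single fact that \(f_m\) is a left fibration, and your proposal contains no argument for it.
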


\begin{proof}
Note that the markings on $E^\natural$ correspond to the $pe$-coCartesian edges and that markings on $F^\natural$ correspond to the $qf$-coCartesian edges. First we observe that the base-change map
\[
E^\natural \to \left(\Lambda^n_i\right)^\sharp \times_{\left(\Delta^n\right)^\sharp} F^\natural
\]
is marked left anodyne by Corollary \ref{inner-anodyne-qequivalence}. Consequently, for each object $m\in \Delta^n$, the induced map on fibers
\[
E_m \to F_m
\]
is a Joyal equivalence.

Next we observe that $f$ is an isofibration, since the marked edges in $Y^\natural$ are precisely the $q$-coCartesian edges and thus in particular the equivalences are marked. We obtain a commutative diagram
\[
\begin{tikzcd}
E_m \arrow{rr}{k_m}\drar[swap]{e_m} & & F_m\dlar{f_m}\\
& X_m & 
\end{tikzcd}
\]
where $k_m$ is a trivial cofibration of the Joyal model structure, $e_m$ is a right fibration (by assumption) and $f_m$ is an isofibration between $\infty$-categories, thus a Joyal fibration. By Lemma \ref{joyalleftfib} the map $f_m$ is thus a left fibration. In other words, the map $F\to X$ induces for each $m\in \Delta^n$ a left fibration on fibers $F_m\to X_m$. Now since $k_m$ is a Joyal equivalence, it is in particular a Covariant equivalence between the left fibrations $e_m$ and $f_m$ and thus a fiberwise equivalence.

It remains to show that $f\colon F\to Y$ is locally coCartesian. By construction, we have a commutative diagram
\[
\begin{tikzcd}
F\arrow{rr}{f}\drar[swap]{qf}& & Y\dlar{q}\\
& \Delta^n &	
\end{tikzcd}
\]
with $qf$ and $q$ being coCartesian fibrations and $f$ sending $qf$-coCartesian edges to $q$-coCartesian edges. Thus by \cite[2.4.2.11]{lurie} the map $f$ is locally coCartesian since the maps $f_m\colon F_m\to Y_m$ are in fact left fibrations.
\end{proof}

\begin{thm}\label{joyalproperness}
Suppose we have a pullback square
\[
\begin{tikzcd}
X\dar[swap]{p}\rar{j} & Y\dar{q}\\
A\rar{i} & B
\end{tikzcd}
\]
in which $p$ and $q$ are coCartesian fibrations and $i$ is inner anodyne. Then $j$ is a Joyal equivalence.
\end{thm}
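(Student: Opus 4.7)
The plan is to reduce to the case $i\colon \Lambda^n_k \hookrightarrow \Delta^n$, apply Lemma \ref{locolem} with $E^\natural = X^\natural$ and $e = \mathrm{id}_{X^\natural}$ to factor $j$ as a marked left anodyne extension $k\colon X^\natural \to F^\natural$ followed by a marked left fibration $f\colon F^\natural \to Y^\natural$, and then to show separately that $k$ and $f$ are Joyal equivalences on underlying simplicial sets.

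The reduction to an inner horn inclusion (with $n\geq 2$ and $0<k<n$) is formal: inner anodyne extensions are saturated, pullback along $q$ preserves pushouts and transfinite colimits since $\mathbf{sSet}$ is locally cartesian closed, and Joyal equivalences are stable under pushout along monomorphisms (left properness), transfinite composition, and retracts. Having made this reduction, Lemma \ref{locolem} (noting that $e = \mathrm{id}_{X^\natural}$ is trivially a left fibration on underlying) produces a factorization $X^\natural \xrightarrow{k} F^\natural \xrightarrow{f} Y^\natural$ together with the assertions that $f$ is locally coCartesian on underlying and that $X_m \to F_m$ is a Joyal equivalence for every vertex $m \in \Delta^n$. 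Since every vertex of $\Delta^n$ lies in $\Lambda^n_k$, the pullback identifies $X_m$ with $Y_m$, and 2-out-of-3 on the triangle $X_m \to F_m \to Y_m$ shows that $F_m \to Y_m$ is a Joyal equivalence at each vertex.

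For $k$, I verify that each generator (A1)--(B2) of marked left anodyne extensions is a Joyal trivial cofibration on underlying: inner horn inclusion for (A1), the identity on underlying for (A2) and (B1), and the pushout-product $(\{0\}\hookrightarrow\Delta^1)\hat\times(\partial\Delta^n\hookrightarrow\Delta^n)$ (hence left anodyne) for (B2). Since the forgetful $\mSet \to \mathbf{sSet}$ preserves colimits and Joyal trivial cofibrations are stable under pushout, transfinite composition, and retracts, the underlying of $k$ is a Joyal trivial cofibration. By 2-out-of-3 it therefore suffices to show $f$ is a Joyal equivalence.

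For that, $F$ and $Y$ are $\infty$-categories (being total spaces of coCartesian fibrations over $\Delta^n$), and $f$ is a morphism over $\Delta^n$ preserving coCartesian edges. I verify that $f$ is essentially surjective and fully faithful: essential surjectivity follows since any $y \in Y$ lies in some $Y_m$ with $F_m \to Y_m$ essentially surjective; fully faithfulness because for $x \in F_m$ and $x' \in F_{m'}$ the mapping space $\Map_F(x,x')$ is empty unless $m \leq m'$, and is otherwise equivalent to $\Map_{F_{m'}}(\sigma_!(x), x')$ via coCartesian pushforward $\sigma_!$ along the unique arrow $m \to m'$ in $\Delta^n$ (and the analogous formula for $Y$), so that the induced map on mapping spaces reduces to the map induced by the Joyal equivalence $f_{m'}\colon F_{m'} \to Y_{m'}$, using that $f$ preserves coCartesian transport up to equivalence. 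Hence $f$ is a Joyal equivalence, and so is $j = f \circ k$. The main obstacle is this last step --- invoking the mapping space formula for coCartesian fibrations over $\Delta^n$ together with the ``essentially surjective + fully faithful'' criterion --- but both are classical facts about coCartesian fibrations that do not require straightening.
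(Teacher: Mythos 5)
Your reduction to inner horn inclusions and your use of Lemma \ref{locolem} (with $E^\natural=X^\natural$, $e=\mathrm{id}$) to produce the factorization $X^\natural\xrightarrow{k}F^\natural\xrightarrow{f}Y^\natural$ are fine, and your argument that $f$ is a Joyal equivalence (fiberwise equivalence over the vertices of $\Delta^n$, plus the mapping-space formula for coCartesian fibrations over $\Delta^n$) is plausible and genuinely different from what the paper does. But there is a real gap in the other half: the claim that the underlying map of a marked left anodyne extension is a Joyal trivial cofibration is false, so you cannot conclude that $k$ is a Joyal equivalence and hence cannot pass from $f$ to $j=f\circ k$ by 2-out-of-3. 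Concretely, generator (B2) with $n=0$ is $\{0\}\to\mcyl$, whose underlying map $\{0\}\to\Delta^1$ is left anodyne but certainly not a Joyal equivalence; for $n=1$ the underlying map adjoins the edge $(1,0)\to(1,1)$ to a three-edge zigzag in $\Delta^1\times\Delta^1$ and changes the fundamental category. ``Left anodyne'' does not imply ``Joyal trivial cofibration'', and the small-object-argument factorization gives you no control beyond $k$ being built from these cells. (If the theorem is true then $k$ \emph{is} a Joyal equivalence in this particular situation, by 2-out-of-3 against $f$ and $j$ --- but that is exactly what you are trying to prove, so it cannot be used.)

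This is precisely the point where the paper takes a different route. Rather than exhibiting $j$ as a composite of two Joyal equivalences, it invokes the criterion of \cite[5.2.16]{cisinskibook}: since $j$ is bijective on objects, it suffices to show that $\mathbf{L}j_!\colon\mathbf{LFib}(X)\to\mathbf{LFib}(Y)$ is fully faithful, i.e.\ that for every left fibration $E\to X$ the derived counit $E\to X\times_Y G$ is a covariant equivalence, where $G\to Y$ is a fibrant replacement of $E\to Y$. All the relevant comparisons then take place in covariant model structures, where left anodyne maps \emph{are} trivial cofibrations, so the awkwardness of $k$ disappears; the fiberwise cofinality of $E_x\to F_x$ comes from Lemma \ref{locolem}, and that of $F_y\to G_y$ from properness of locally coCartesian fibrations (Corollary \ref{locoprop}). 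If you want to salvage your more direct ``fully faithful + essentially surjective'' strategy, you would need an independent argument that $k\colon X\to F$ is a Joyal equivalence (note that $X$ need not even be an $\infty$-category, being a coCartesian fibration over $\Lambda^n_k$), and I do not see how to get this without effectively reproving the theorem.
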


\begin{proof}
It suffices to show the assertion for squares of the form
\[
\begin{tikzcd}
X\dar[swap]{p}\rar{j} & Y\dar{q}\\
\Lambda^n_k\rar{i} & \Delta^n
\end{tikzcd}
\]
with $i$ an inner horn inclusion.

According to \cite[5.2.16]{cisinskibook} the map $j$ is a Joyal equivalence if and only if it induces an essentially surjective functor on homotopy categories and it induces a fully faithful functor
\[
\mathbf{L}j_!\colon \mathbf{LFib}(X)\to \mathbf{LFib}(Y)
\]
where \(\mathbf{LFib}(X)\) denotes the homotopy category of the covariant model structure on simplicial sets over \(X\). Since $j$ is a pullback of an inner anodyne map, it is a bijection on objects, thus clearly essentially surjective. For the second condition, we need to show that the derived counit
\[
id \to \mathbf{R}j^\ast \mathbf{L}j_!
\]
is an isomorphism in $\mathbf{LFib}(X)$. Let $e\colon E\to X$ be a left fibration. To prove that the derived counit is an isomorphism, we construct a particular fibrant replacement of the composition $E\to X\to Y$ in the covariant model structure over \(Y\). 

To start off, we have a pullback square in marked simplicial sets
\[
\begin{tikzcd}
X^\natural\dar[swap]{p}\rar{j} & Y^\natural\dar{q}\\
(\Lambda^n_k)^\sharp\rar{i} & (\Delta^n)^\sharp	
\end{tikzcd}
\]
in which $p$ and $q$ are marked left fibrations. Since $e$ is a left fibration, we have a marked left fibration $e\colon E^\sharp \to X^\sharp$ and pulling back along the inclusion $X^\natural \to X^\sharp$, we obtain a marked left fibration $E^\natural \to X^\natural$. We thus have a diagram
\[
\begin{tikzcd}
E^\natural\dar[swap]{e} & \\
X^\natural\dar[swap]{p}\rar{j} & Y^\natural\dar{q}\\
(\Lambda^n_k)^\sharp\rar{i} & (\Delta^n)^\sharp
\end{tikzcd}
\]
where the markings on $E^\natural$ correspond to the $pe$-coCartesian edges.

Now complete the diagram as follows,
\[
\begin{tikzcd}
E^\natural \dar[swap]{e}\rar{k} & F^\natural\dar{f}\\
X^\natural\dar[swap]{p}\rar{j} & Y^\natural\dar{q}\\
(\Lambda^n_k)^\sharp\rar{i} & (\Delta^n)^\sharp
\end{tikzcd}
\]
with $k$ marked left anodyne and $f$ a marked left fibration. The markings on $F^\natural$ thus correspond to the $qf$-coCartesian edges. The underlying map of simplicial sets $f\colon F\to Y$ is in general not a coCartesian fibration, but the previous Lemma \ref{locolem} shows that it is a locally coCartesian fibration and that for any $x\in X$ the induced map $E_x\to F_x$ is a Joyal equivalence (between Kan complexes).

Now find a factorization
\[
\begin{tikzcd}
F\arrow{rr}{l}\drar[swap]{f} & & G\dlar{g}\\
& Y &
\end{tikzcd}
\]
with $l$ left anodyne and $g$ a left fibration. By Lemma \ref{locoprop} the map $f$ is proper and thus by \cite[Corollary 4.4.28]{cisinskibook} the induced map on fibers $F_y\to G_y$ is  cofinal for any $y\in Y$.

To summarize, we have constructed a commutative diagram 
\[
\begin{tikzcd}
E\rar{k}\dar[swap]{e} & F\rar{l}\dar[swap]{f} & G\dlar{g}\\
X\rar{j} & Y	
\end{tikzcd}
\]
in which $k$ and $l$ are left anodyne and $g$ is a left fibration, thus $g$ is a fibrant replacement for the composition $je$. Now for any $x\in X$ we have induced maps on fibers
\[
E_x \xrightarrow{k_x} F_x \xrightarrow{l_x} G_x
\]
where $k_x$ is cofinal by Lemma \ref{locolem} and $l_x$ is cofinal by the above arguments. In particular this implies that the induced map
\[
E\to X\times_Y G
\]
is cofinal which in turn implies that the derived counit is an isomorphism. Thus $X\to Y$ is fully faithful and this finishes the proof.
\end{proof}

\begin{sub}
As a consequence we show that the coCartesian model structure is functorial with respect to coCartesian fibrations.	
\end{sub}

\begin{thm}\label{pbleftquillen}
Let $p\colon X^\natural \to A^\sharp$ be a Cartesian fibration (i.e. a marked right fibration). Then the pullback functor
\[
p^\ast \colon \mSet/A^\sharp\to \mSet/X^\natural
\]
is a left Quillen functor when each category is endowed with the coCartesian model structure.
\end{thm}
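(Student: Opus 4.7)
The plan is to verify both Quillen conditions for $p^*$. Preservation of cofibrations is immediate, as pullback preserves monomorphisms of underlying simplicial sets. For preservation of trivial cofibrations, I exploit that $\mSet$ is locally Cartesian closed, so $p^*$ is a left adjoint and preserves all small colimits; consequently the class of maps $f$ over $A^\sharp$ for which $p^*(f)$ is a trivial cofibration is saturated, and it suffices to check each of the generators (A1)--(B2) of marked left anodyne extensions from Definition \ref{markedgenerators}, for an arbitrary structure map to $A^\sharp$.

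The cellular generators (B1) and (B2) are immediate from the dual of Theorem \ref{ezproper}: in the pullback square the right-hand map is itself a pullback of $p$ and hence a marked right fibration, so pulling back a cellular marked left anodyne along it yields a cellular marked left anodyne, which is in particular a trivial cofibration. The non-cellular generators (A1), (A2) are treated by adapting the strategy of Proposition \ref{improvedrightproper}'s proof. For (A1) with structure map $\sigma\colon \Delta^n \to A$, set $X_n := X\times_A \Delta^n$; since $p$ is an inner fibration so is its pullback $X_n \to \Delta^n$, and $X_n$ is therefore an \inftycat. The dual of Theorem \ref{joyalproperness} then ensures that the underlying inclusion $X \times_A \Lambda^n_k \hookrightarrow X_n$ is a Joyal equivalence. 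The markings on $p^*((\Delta^n)^\flat)$ are precisely the $p$-Cartesian edges lying over degenerate edges of $\Delta^n$, and such edges are equivalences in $X_n$. Interposing in $\mSet/X^\natural$ the square whose top row is the $\flat$-ification $(X \times_A \Lambda^n_k)^\flat \to (X_n)^\flat$ of the underlying inclusion and whose bottom row is the pullback in question, the vertical maps add only markings of equivalences and are marked left anodyne by iterated application of (A2) along $J$-diagrams in $X_n$ that witness invertibility. The top row is a weak equivalence in the coCartesian model structure, since after marking all equivalences on both sides it reduces to the Joyal equivalence between the resulting fibrant marked objects. A two-out-of-three argument then forces the bottom row to be a weak equivalence, and being a cofibration it is a trivial cofibration. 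For (A2), the pullback of $J^\flat \to J^\sharp$ is the identity on underlying $X \times_A J$ and merely enlarges the marking by $p$-Cartesian edges over non-degenerate edges of $J$; each of these is an equivalence (a $p$-Cartesian edge over an equivalence in $A$), and such markings may be adjoined one at a time by pushouts of (A2) along $J$-diagrams in $X \times_A J$, whose existence is guaranteed by the lifting properties of the Cartesian fibration $p$.

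The principal obstacle is case (A1). While the underlying Joyal equivalence is directly delivered by Theorem \ref{joyalproperness} and the markings on both sides consist only of equivalences, promoting this to a weak equivalence in $\coCart(X^\natural)$ requires careful bookkeeping of the markings and of the fibrant replacements involved; the template supplied by the proof of Proposition \ref{improvedrightproper} is what makes this passage go through cleanly.
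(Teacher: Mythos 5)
Your proof is correct and follows essentially the same route as the paper's: reduce to the generators, and handle (A1) via the flat-versus-natural-marking comparison square, the (dual of the) properness theorem \ref{joyalproperness}, the fact that marking equivalences is marked left anodyne, and two-out-of-three, with the final upgrade from ``trivial cofibration over the point with fibrant target'' to ``marked left anodyne'' exactly as in the template of Proposition \ref{improvedrightproper}. The only difference is that you also spell out the cases (A2), (B1) and (B2) --- via the dual of Theorem \ref{ezproper} and the marking-of-equivalences argument --- which the paper's proof leaves implicit after treating (A1).
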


\begin{proof}
Since marked simplicial sets are locally cartesian closed, the functor $p^\ast$ is a left adjoint. To show that $p^\ast$ is left Quillen, it suffices to show that it preserves marked left anodyne extensions. In particular, it suffices to show that $p^\ast$ preserves the generating marked left anodyne extensions of Definition \ref{markedgenerators}.

We first show this for set (A1). We need to show that for any diagram of pullback squares of the form
\[
\begin{tikzcd}
	X^\natural_{\innerhorn}\rar{j} \dar & X^\natural_{\nsimplex} \dar \rar & X^\natural \dar{p}\\
	(\innerhorn)^\flat \rar{i} & (\nsimplex)^\flat \rar & A^\sharp,
\end{tikzcd}
\]
where $i$ is inner horn inclusion, the map $j$ is a trivial cofibration in $\mSet/X^\natural$. We observe that the marked simplicial sets $X^\natural_{\innerhorn}$ and $X^\natural_{\nsimplex}$ have precisely the equivalences in their fibers over $(\innerhorn)^\flat$ and $(\nsimplex)^\flat$ marked. In particular $X^\natural_{\Delta^n}$ is fibrant over the point. Since the underlying map of simplicial sets
\[
X_{\Lambda^n_k}\to X_{\Delta^n}
\]
is a Joyal trivial cofibration by Theorem \ref{joyalproperness}, the map 

\[
X^\flat_{\Lambda^n_k}\to X^\flat_{\Delta^n}
\]
is a trivial cofibration over the point. We have a square
\[
\begin{tikzcd}
X_{\Lambda^n_k}^\flat\rar \dar & X_{\Delta^n}^\flat\dar\\
X_{\Lambda^n_k}^\natural\rar & X^\natural_{\Delta^n}
\end{tikzcd}
\]
Here, the vertical maps are marked left anodyne, since they are given by marking equivalences and the upper horizontal map is a trivial cofibration. Thus the lower horizontal map is a trivial cofibration over the point. Since $X^\natural_{\Delta^n}$ is fibrant, this map is in fact marked left anodyne by \cite[2.31]{contrakim} and thus a trivial cofibration in $\mSet/X^\natural$. 
\end{proof}

\section{Straightening and unstraightening}


\begin{sub}
This section proves a straightening/unstraightening equivalence for coCartesian fibrations. For this we assume that \(A\) is the nerve of a small category. We will prove the following Theorem.
\end{sub}

\begin{thm}\label{rectificationeq}
For any simplicial set \(B\) there is a Quillen equivalence
\[
\Fun(A,\mSet/B^\sharp) \simeq \mSet/A^\sharp \times B^\sharp
\]
where the right hand side is endowed with the Cartesian model structure and the left hand side is endowed with the projective Cartesian model structure.
\end{thm}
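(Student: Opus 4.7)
The plan is to construct an adjunction $L \dashv R$ between $\Fun(A,\mSet/B^\sharp)$ and $\mSet/(A^\sharp \times B^\sharp)$ and then prove it is a Quillen equivalence. Writing $A = N(C)$, I would define the left adjoint by the coend
\[
L(F) = \int^{a \in C} F(a) \times S_a,
\]
where $S_a \to A^\sharp$ is an appropriately marked slice (or coslice) of $A$ at $a$, chosen so that it is fibrant over $A^\sharp$ in the Cartesian model structure and has contractible fiber at $a$. The right adjoint $R$ sends $X^\natural \to A^\sharp \times B^\sharp$ to the functor $a \mapsto \Map_{A^\sharp}(S_a, X^\natural)$ in $\mSet/B^\sharp$. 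That $(L,R)$ is a Quillen pair can be checked on the generators of the projective Cartesian model structure, which have the form $\Hom_C(a,-) \otimes i$ for $i$ a generator in $\mSet/B^\sharp$; the image under $L$ is essentially $S_a \times i$ over $A^\sharp \times B^\sharp$, and the required (trivial) cofibration property follows from Theorem \ref{pbleftquillen}.

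To establish the Quillen equivalence, I would verify that both the derived unit and the derived counit are weak equivalences. The derived unit $F \to R L(F)$, evaluated at $a$, becomes an equivalence by a Fubini argument on the coend combined with contractibility of the fiber $(S_a)_a$. The derived counit $L R(X)^c \to X$ for a fibrant $X^\natural$ is the main technical point. I would reduce to the case $A = \nsimplex$ by expressing $N(C)$ as a colimit of its nondegenerate simplices and using Theorem \ref{inner-anodyne-qequivalence} to pass freely across inner anodyne comparisons so that Quillen equivalences over each piece glue into one over the whole. For $A = \nsimplex$ the proof proceeds by induction on $n$: a Cartesian fibration over $\nsimplex \times B^\sharp$ is decomposed along an inner anodyne filtration and reconstructed from its vertex fibers together with the transition data encoded by the markings.

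The main obstacle will be this inductive reconstruction step for $A = \nsimplex$. The strategy is to combine Theorem \ref{joyalproperness}, which guarantees that pullback along the various inclusions into $\nsimplex$ preserves Joyal equivalences, with Theorem \ref{pbleftquillen}, ensuring that pullback along Cartesian fibrations is left Quillen; together these control how the fiberwise data assembles back into the global fibration, so that the counit comparison is a weak equivalence. Compatibility with cartesian products then follows formally from the coend formula for $L$, since the coend commutes with cartesian products and the projective Cartesian model structure is compatible with cartesian product on both sides.
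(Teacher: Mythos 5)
Your setup coincides with the paper's: the left adjoint is the coend \(\lambda(F)=\int^{a}\coslice\times F(a)^+\), the right adjoint is \(a\mapsto \Map^{B^\sharp}(\coslice,-)\), and the Quillen-pair check on the generators \(a\otimes i\) is exactly what the paper does. The divergence — and the gap — is in how you establish the Quillen equivalence. First, the derived unit: a \enquote{Fubini argument on the coend} does not go through, because \(\Map^{B^\sharp}(S_a,-)\) is a right adjoint in that variable and does not commute with the coend defining \(L(F)\); moreover the derived unit requires passing through a fibrant replacement of \(L(F)\), which destroys any pointwise coend formula. What is actually needed (and what the paper proves) is that the right adjoint preserves homotopy colimits; this is deduced from the identification \(ev_a\rho\simeq a^*\) on fibrant objects together with the fact that \(a^*\) preserves homotopy colimits — itself a consequence of Theorem \ref{pbleftquillen} — and then the unit is checked only on the generators \(a\otimes X\), where it reduces to the explicit equivalence \(\Map^{B^\sharp}(\coslice,W^+)\simeq W^+_a\) and a product lemma.

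Second, and more seriously, your treatment of the derived counit is a placeholder for the hardest content of straightening. Reconstructing a Cartesian fibration over \(\nsimplex\times B^\sharp\) from its vertex fibers and transition data by induction along an inner anodyne filtration is not delivered by Theorem \ref{joyalproperness} or Theorem \ref{pbleftquillen}; those control pullback functors, not the gluing of a fibration out of fiberwise data (which would require something like a mapping-simplex comparison lemma you have not stated). Likewise, gluing Quillen equivalences over the nondegenerate simplices of \(N(C)\) into one over \(A\) is a nontrivial descent statement for the projective model structure on \(\Fun(A,\mSet/B^\sharp)\) that you do not justify. The point is that none of this is necessary: since Cartesian equivalences between Cartesian fibrations over \(A^\sharp\times B^\sharp\) are detected on fibers and \(ev_a\rho(W^+)\simeq W^+_a\), the right adjoint \emph{reflects} weak equivalences between fibrant objects. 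Together with the derived unit being an equivalence (on generators, extended by homotopy colimits), this already yields the Quillen equivalence without ever analyzing the counit over \(\nsimplex\) directly. As written, your proposal leaves the central step unproved.
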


\begin{sub}
Contrary to the existing literature \cite{lurie}, \cite{short_StrUnStr}, our proof does not involve simplicial categories. Our equivalence is also \emph{not} the full straightening/unstraightening equivalence as our base category is assumed to be the nerve of a category. Our proof follows ideas from \cite{heutsmoerdijk}.
\end{sub}

\begin{sub}
We first define the functors involved. Let \(X^+\to A^\sharp\) be a map and \(B^+\) be a marked simplicial set. We have a functor
\[
\mSet/B^+\to \mSet/A^\sharp \times B^+
\]
given by sending a map \(Y^+\to B^+\) to the product \(X^+\times Y^+\to A^\sharp \times B^+\). This functor has a right adjoint
\[
\Map^B(X^+,-) : \mSet/A^\sharp \times B^+\to \mSet/B^+
\]
By the universal property a map
\[
\begin{tikzcd}
K^+\ar{rr}\drar & & \Map^B(X^+,W^+)\dlar\\
& B^+ &
\end{tikzcd}
\]
is thus given by a commutative triangle
\[
\begin{tikzcd}
X^+ \times K^+\ar{rr}\drar & & W^+\dlar\\
& A^\sharp \times B^+ &
\end{tikzcd}
\]
Note that \(\Map^B(X^+,W^+)\) is (contravariantly) functorial in \(X^+\to A^+\). This defines a functor
\[
\rho : \mSet/A^\sharp \times B \to \mathbf{Fun}(A,\mSet/B^+),\quad W^+ \mapsto (a\mapsto \Map^B(\coslice,W^+)
\]
Given a map \(p : W^+\to A^\sharp \times B^+\) The marked simplicial set \(\Map^{B^+}(X^+,W^+)\) can be described as the pullback
\[
\begin{tikzcd}
\Map^{B^+}(X^+,W^+)\rar \dar & \Hom^+(X^+,W^+)\dar{p_\ast}\\
B^+\rar & \Hom^+(X^+,A^\sharp \times B^+ )
\end{tikzcd}
\]
where the bottom map is given by the product of the fixed map \(X^+\to A^\sharp\) and the identity on \(B^+\).
\end{sub}

\begin{sub}
This functor has a left adjoint
\[
\lambda : \mathbf{Fun}(A,\mSet/B^+)\to \mSet/A^\sharp \times B^+.
\]
Given a functor \(F : A\to \mSet/B^+\) we obtain the functor
\[
A^{op}\times A \to \mSet/A^\sharp \times B^+,\quad (a,a)\mapsto (\coslice\times F(a)^+\to A^\sharp \times B^+)
\]
The value of the left adjoint is then given by taking the coend
\[
\lambda(F)= \int^{A}\coslice \times F(a)^+
\]
\end{sub}

\begin{Example}\label{generators}
Let \(X^+\to B^+\) be a map of marked simplicial sets and let \(a\) be an object of \(A\). Let \(a\otimes X^+\) be the functor given by left Kan extension along the inclusion \(\{a\}\to A\). Then we have
\[
\lambda(a\otimes X^+)=
\begin{tikzcd}
\coslice\times X^+ \dar\\
A^\sharp \times B^+
\end{tikzcd}
\]
\end{Example}

\begin{prop}
For any coCartesian equivalence \(K^+\to L^+\) over \(A^\sharp\) and any marked left fibration \(X^+\) over \(A^\sharp \times B^+\) the induced map
\[
\Map^{B^+}(L^+,X^+) \to \Map^{B^+}(K^+,X^+)
\]
is a coCartesian equivalence over \(B^+\).
\end{prop}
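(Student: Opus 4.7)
My plan is to establish that the Cartesian product bifunctor
\[
\mSet/A^\sharp \times \mSet/B^+ \longrightarrow \mSet/A^\sharp\times B^+,\qquad (K^+,Y^+)\mapsto K^+\times Y^+,
\]
which is left adjoint in each variable to $\Map^{B^+}(-,-)$, is a left Quillen bifunctor for the coCartesian model structures on the three categories involved. The proposition will then follow formally from the pushout-product adjunction together with Ken Brown's lemma.

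Granting the Quillen bifunctor property, for any marked left anodyne extension $i\colon K^+\to L^+$ over $A^\sharp$ and any monomorphism $j\colon Y^+\to Z^+$ over $B^+$, the pushout-product
\[
i\,\hat\times\, j\colon L^+\times Y^+ \cup_{K^+\times Y^+} K^+\times Z^+ \longrightarrow L^+\times Z^+
\]
is marked left anodyne over $A^\sharp\times B^+$. Since $X^+\to A^\sharp\times B^+$ is a marked left fibration, $i\,\hat\times\, j$ lifts against it, and by adjunction the map $\Map^{B^+}(L^+,X^+)\to\Map^{B^+}(K^+,X^+)$ has the right lifting property against every monomorphism over $B^+$; it is therefore a trivial fibration in the coCartesian model structure on $\mSet/B^+$, and in particular a coCartesian equivalence. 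Because every object of $\mSet/A^\sharp$ is cofibrant (cofibrations being precisely the monomorphisms), the dual form of Ken Brown's lemma applied to the contravariant functor $\Map^{B^+}(-,X^+)$ then promotes this conclusion from marked left anodyne extensions to arbitrary coCartesian equivalences $K^+\to L^+$.

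The main obstacle is therefore establishing the Quillen bifunctor property itself. By saturation it suffices to verify the pushout-product axiom when $i$ ranges over the generators (A1)--(B2) of Definition \ref{markedgenerators} and $j$ ranges over the two standard generating classes of monomorphisms of $\mSet$, namely $(\partial\Delta^n)^\flat\hookrightarrow(\Delta^n)^\flat$ and $(\Delta^1)^\flat\hookrightarrow(\Delta^1)^\sharp$. The checks against (A1) reduce to the classical pushout-product behaviour of inner horn inclusions, while those for (A2) are elementary manipulations involving marking equivalences. The genuine technical work is the verification for (B1) and (B2), whose definition in terms of iterated products with $\mcyl$ is engineered precisely so that this closure property holds; these combinatorial decompositions have essentially been carried out in \cite{contrakim} during the construction of the coCartesian model structure, and I would invoke them here rather than redo the bookkeeping.
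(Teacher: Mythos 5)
Your proposal is correct and is essentially the paper's own argument: the paper phrases the key step as the assertion that the pullback-hom (Leibniz hom) of $K^+\to L^+$ against the marked left fibration $X^+\to A^\sharp\times B^+$ is a trivial fibration whenever $K^+\to L^+$ is marked left anodyne, which is exactly the adjoint form of your pushout-product axiom, and both arguments delegate the combinatorial verification on the generators to the cited construction of the model structure. Your explicit invocation of Ken Brown's lemma to pass from marked left anodyne extensions to arbitrary coCartesian equivalences is a step the paper leaves implicit, so if anything your write-up is slightly more complete.
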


\begin{proof}
We have a pullback square
\[
\begin{tikzcd}
\Map^{B^+}(L^+,X^+)\rar \dar & \underline \Hom^+(L^+,X^+)\dar\\
\Map^{B^+}(K^+,X^+)\rar & \underline \Hom^+ (K^+,X^+)\times_{\underline \Hom^+(K^+, A^\sharp\times B^+)} \underline \Hom^+(L^+,A^\sharp \times B^+)
\end{tikzcd}
\]
Since the right hand vertical map is a trivial fibration whenever \(K^+\to L^+\) is marked left anodyne, the functor \(\Map_{A^\sharp}^{B^+}(-,X^+)\) sends trivial cofibrations to weak equivalences in the opposite of the cocartesian model structure on \(\mathbf{sSet^+}/B^+\). 
\end{proof}

\begin{coro}\label{fibercomputation}
Suppose \(X^+\to A^\sharp \times B^+\) is a marked left fibration. Let \(X^+_a\) be the pullback
\[
\begin{tikzcd}
X^+_a\rar \dar & X^+\dar\\
\{a\}\times B^+\rar & A^\sharp \times B^+
\end{tikzcd}
\]
Then there is a coCartesian equivalence
\[
X^+_a \simeq \Map_{A^\sharp}^{B^+}(\coslice ,X^+)
\]
over \(B^+\).
\end{coro}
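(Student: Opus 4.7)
The strategy is to deduce the corollary from the preceding Proposition, applied to the natural inclusion \(\{a\} \hookrightarrow \coslice\) over \(A^\sharp\) picking out the initial object \(\mathrm{id}_a\). Once this inclusion is verified to be a coCartesian equivalence over \(A^\sharp\), the Proposition yields a coCartesian equivalence over \(B^+\)
\[
\Map^{B^+}(\coslice, X^+) \longrightarrow \Map^{B^+}(\{a\}, X^+),
\]
and it remains to identify the right-hand side with \(X^+_a\).

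For the identification, I would simply unwind the pullback defining \(\Map^{B^+}\). Since \(\{a\} = \Delta^0\), both \(\underline\Hom^+(\{a\}, X^+) = X^+\) and \(\underline\Hom^+(\{a\}, A^\sharp \times B^+) = A^\sharp \times B^+\), and the bottom map \(B^+ \to A^\sharp \times B^+\) is \((a, \mathrm{id}_{B^+})\). The resulting pullback is then tautologically the pullback defining \(X^+_a\).

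For the key coCartesian equivalence, I would argue that \(\{a\} \to \coslice\) is in fact a marked left anodyne extension over \(A^\sharp\). The target \(\coslice \to A^\sharp\) is a marked left fibration, since \(a/A \to A\) is a left fibration (so all its edges are coCartesian and coincide with the marked edges), and is therefore fibrant; so it suffices to produce a weak equivalence. The classical input is that \(\{\mathrm{id}_a\} \hookrightarrow a/A\) is left anodyne at the level of underlying simplicial sets, which follows from \(\mathrm{id}_a\) being initial in \(a/A\) together with a standard cellular filtration. Promoting this to a marked left anodyne extension over the fully marked \(A^\sharp\) combines the inner-horn generators (A1) of Definition \ref{markedgenerators} with the marking generator (A2); alternatively I would appeal to the marked version of this statement established in \cite{contrakim}. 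Applying the preceding Proposition then concludes the proof.

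The main obstacle I expect is precisely this last step: upgrading the unmarked left-anodyne statement to a marked left anodyne extension over \(A^\sharp\) requires dealing with the outer horns that appear in the natural cellular filtration of \(a/A\), which become marked anodyne only after sufficiently many edges are marked as equivalences. An alternative route that sidesteps this issue is to show directly that evaluation at \(\mathrm{id}_a\) defines a trivial fibration \(\Map^{B^+}(\coslice, X^+) \to X^+_a\) by a pushout--product argument using the marked left fibration property of \(X^+ \to A^\sharp \times B^+\), which reduces the lifting problem to the same marked-anodyne input applied against cofibrations in \(\mSet/B^+\).
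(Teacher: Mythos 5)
Your proposal matches the paper's (one‑line) proof: the paper simply asserts that \(\{a\}\to\coslice\) is marked left anodyne and applies the preceding Proposition, leaving the identification \(\Map^{B^+}(\{a\},X^+)\cong X^+_a\) implicit exactly as you spell it out. One correction to your justification of that key assertion: the generators (A1)/(A2) cannot produce the sharp marking on \(a/A\) (its marked edges are in general not equivalences, so (A2) does not apply); rather, the inclusion of the initial vertex into \(\coslice\) is \emph{cellular} marked left anodyne, i.e.\ built from the (B1)/(B2) generators --- this is the dual of \cite[Theorem 5.2.14]{cisinskibook}, which the paper already invokes in the proof of Theorem \ref{inner-anodyne-qequivalence}, so your fallback of citing the marked version of the statement is the right move.
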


\begin{proof}
The map \(\{a\}\to \coslice\) is marked left anodyne.
\end{proof}

\begin{prop}
The right adjoint preserves fibrations between fibrant objects.
\end{prop}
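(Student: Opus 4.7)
A morphism in \(\Fun(A,\mSet/B^+)\) is a fibration between fibrant objects for the projective Cartesian model structure if and only if it is objectwise so, and the fibrations between fibrant objects in the Cartesian model structure on \(\mSet/B^+\) are precisely the marked right fibrations. Given a fibration \(p\colon W^+\to V^+\) between fibrant objects in \(\mSet/A^\sharp\times B^+\)---so that \(p\) itself is a marked right fibration---I must therefore show that for each object \(a\in A\) the induced map
\[
\rho(p)(a)\colon \Map^{B^+}(\coslice,W^+)\to\Map^{B^+}(\coslice,V^+)
\]
is a marked right fibration over \(B^+\), which I plan to do by checking the defining right lifting property.

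By the \(\lambda\dashv\rho\) adjunction, a lifting problem for \(\rho(p)(a)\) against a marked right anodyne extension \(K^+\hookrightarrow L^+\) in \(\mSet/B^+\) is equivalent to a lifting problem
\[
\begin{tikzcd}
\coslice\times K^+ \rar \dar & W^+ \dar{p}\\
\coslice\times L^+ \rar & V^+
\end{tikzcd}
\]
of \(p\) in \(\mSet/A^\sharp\times B^+\). Since \(p\) is a fibration in the Cartesian model structure, it has the right lifting property against every trivial cofibration, so it suffices to show that \(\coslice\times K^+\to\coslice\times L^+\) is a trivial cofibration.

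To this end I would invoke the pushout-product axiom for the Cartesian model structure on marked simplicial sets, dual to \cite[Proposition 3.1.2.3]{lurie}: the pushout-product of a cofibration with a trivial cofibration is a trivial cofibration. Applying this to the cofibration \(\emptyset\hookrightarrow\coslice\) and the trivial cofibration \(K^+\hookrightarrow L^+\) gives exactly what is needed. I expect verifying this axiom to be the technical heart of the argument, since it reduces to a combinatorial check on the generating sets (A1), (A2), (B1'), (B2') of Definition \ref{markedgenerators}: the classes (A1), (A2) are handled by routine horn-filling, while (B1') and (B2') require a careful filtration argument on the marked edges of \((\Delta^1)^\sharp\)-products, in the style of \cite{contrakim}. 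Once this axiom is in hand, the rest of the argument is purely formal.
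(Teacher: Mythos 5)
Your proof is correct and is essentially the paper's argument in adjoint form: the paper establishes the claim by exhibiting \(\Map^{B^+}(K^+,W^+)\to\Map^{B^+}(K^+,V^+)\) as a pullback of \(\underline{\Hom}^+(K^+,W^+)\to\underline{\Hom}^+(K^+,V^+)\) and invoking that the internal hom out of a cofibrant object preserves marked fibrations, which is exactly the exponential transpose of the pushout-product property you reduce to. Both arguments rest on the same enrichment/pushout-product fact for the marked model structure (available from the paper's reference \cite{contrakim} rather than \cite{lurie}), so no genuinely new idea is needed on either side.
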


\begin{proof}
Let 
\[
\begin{tikzcd}
X^+\drar \ar{rr} & & Y^+\dlar\\
& A^\sharp \times B^+ &
\end{tikzcd}
\]
be a fibration between marked left fibrations. In particular by \cite{contrakim} it is a marked left fibration. We have a pullback square
\[
\begin{tikzcd}
\Map_{A^\sharp}^{B^+}(K^+,X^+)\rar \dar & \underline \Hom^+(K^+,X^+)\dar\\
\Map_{A^\sharp}^{B^+}(K^+,Y^+)\rar & \underline \Hom(K^+,Y^+)
\end{tikzcd}
\]
The right hand side is a marked left fibration hence the pullback is a marked left fibration between marked left fibrations over \(B^+\).
\end{proof}

\begin{coro}
The functors determine a Quillen adjunction
\[
\lambda \colon \Fun (A,\mathbf{sSet}^+/B^+) \leftrightarrow \mathbf{sSet}^+/A^\sharp \times B^+ \colon \rho
\]
\end{coro}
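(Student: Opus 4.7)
The plan is to verify the two defining conditions of a Quillen adjunction: that \(\lambda\) preserves cofibrations, and that \(\rho\) preserves fibrations between fibrant objects. The second condition is already the content of the preceding proposition, so only the first remains to be checked.

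For this I would work on a generating set. The projective cofibrations of \(\Fun(A,\mSet/B^+)\) form the saturated class generated by the cells \(a\otimes i\), where \(a\) ranges over objects of \(A\) and \(i\) over a generating set of cofibrations of \(\mSet/B^+\). By Example \ref{generators},
\[
\lambda(a\otimes i) = \coslice \times i,
\]
viewed as a map in \(\mSet/A^\sharp \times B^+\). Since \(i\) is a monomorphism, so is \(\coslice \times i\), hence \(\lambda(a\otimes i)\) is a cofibration. Since \(\lambda\) is a left adjoint it preserves colimits and therefore the saturated closure, so it sends every cofibration to a cofibration. Equivalently, \(\rho\) preserves trivial fibrations.

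Combined with the preceding proposition, this establishes both halves of the standard criterion under which an adjunction between combinatorial model categories whose cofibrations are the monomorphisms is a Quillen adjunction: the left adjoint preserves cofibrations, and the right adjoint preserves fibrations between fibrant objects. The only real point of care is that the preceding proposition addresses fibrations between fibrant objects rather than arbitrary fibrations; but this, combined with preservation of trivial fibrations given by Example \ref{generators}, is exactly the form of the criterion we need in this setting. Beyond assembling these two observations I do not anticipate any genuine obstacle.
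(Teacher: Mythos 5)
Your proof is correct and matches the paper's (implicit) argument: the corollary is stated without proof precisely because it follows from the preceding proposition ($\rho$ preserves fibrations between fibrant objects) together with the observation that $\lambda$ sends the generating projective cofibrations $a\otimes i$ to the monomorphisms $\coslice\times i$, via the standard criterion for Quillen adjunctions. The only cosmetic remark is that this criterion needs neither combinatoriality nor that cofibrations be exactly the monomorphisms; it holds for any adjunction of model categories.
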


\begin{sub}
In order to prove that this defines a Quillen equivalence, we show that this Quillen adjunction respects evaluation at a point in \(A\). It is easy to see that the following square commutes for each object 
\(a\) of \(A\):
\begin{equation}\label{evaluation}
\begin{tikzcd}
\Fun(A,\mSet/B^+)\rar{\lambda} \dar[swap]{ev_a} & \mSet/A^\sharp \times B^+\dar{a^*}\\
\mSet/B^+ \rar{id} & \mSet/B^+ 
\end{tikzcd}
\end{equation}
\end{sub}

\begin{prop}\label{mate}
The induced transformation
\[
ev_a\rho \Rightarrow a^*
\]
is a coCartesian equivalence for each fibrant object of \(\mSet/A^\sharp \times B^+\).
\end{prop}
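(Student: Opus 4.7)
The plan is to identify the natural transformation $ev_a\rho\Rightarrow a^*$ at a fibrant $X^+$ with a concrete pullback map, and then invoke the Proposition preceding Corollary~\ref{fibercomputation}.

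First, I would unwind both sides at a fibrant $X^+\to A^\sharp\times B^+$. By definition $ev_a\rho(X^+)=\Map_{A^\sharp}^{B^+}(\coslice,X^+)$ and $a^*(X^+)=X^+_a$. The mate of the commutativity isomorphism in~\eqref{evaluation} evaluates at $X^+$ to the composite
\[
ev_a\rho(X^+)\;\cong\;a^*\lambda\rho(X^+)\;\xrightarrow{a^*\epsilon_{X^+}}\;a^*X^+,
\]
where $\epsilon$ denotes the counit of $\lambda\dashv\rho$.

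Second, I would verify that this composite agrees with the map
\[
\Map_{A^\sharp}^{B^+}(\coslice,X^+)\;\longrightarrow\;\Map_{A^\sharp}^{B^+}(\{a\},X^+)\;=\;X^+_a
\]
induced by contravariant functoriality in the marked inclusion $\iota_a\colon\{a\}\hookrightarrow\coslice$ that picks out the identity morphism of $a$. This is a coend computation: starting from
\[
\lambda\rho(X^+)=\int^{a'\in A}(a'/A)^\sharp\times\Map_{A^\sharp}^{B^+}((a'/A)^\sharp,X^+),
\]
pulling back along $\{a\}\hookrightarrow A^\sharp$ replaces each $(a'/A)^\sharp$ by the discrete set $A(a',a)$, and the coYoneda lemma collapses the resulting coend to $\Map_{A^\sharp}^{B^+}(\coslice,X^+)$ via the "diagonal" summand $a'=a$, $\mathrm{id}_a$. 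Tracking $\epsilon$ through this identification sends a map $\phi\colon\coslice\to X^+$ to $\phi(\mathrm{id}_a)\in X^+_a$, which is precisely $\iota_a^*\phi$.

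Once the identification is in hand, the conclusion is immediate: $\iota_a$ is marked left anodyne, exactly as used in the proof of Corollary~\ref{fibercomputation}, and therefore by the Proposition preceding that corollary the induced map is a coCartesian equivalence over $B^+$ whenever $X^+\to A^\sharp\times B^+$ is a marked left fibration. The main obstacle is step two, namely the coend calculation; one has to keep careful track of the markings on $(a'/A)^\sharp$ and its fibers in order to legitimately apply coYoneda in the marked setting. Once that bookkeeping is done, identifying the mate with $\iota_a^*$ is forced, and the remaining argument reduces to citing the two quoted results.
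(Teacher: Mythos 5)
Your proof is correct and follows essentially the same route as the paper: both identify the transformation with \(a^*\) applied to the counit of \(\lambda\dashv\rho\), observe that on fibers over \(a\) it becomes the restriction along the marked left anodyne inclusion \(\{a\}\to\coslice\), and conclude via the Proposition preceding Corollary \ref{fibercomputation}. Your coend/coYoneda calculation simply spells out the ``explicit computation'' that the paper leaves implicit.
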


\begin{proof}
Let \(W^+\to A^\sharp \times B^+\) be a coCartesian fibration. We need to show that the counit
\[
\lambda \rho (W^+)\to W^+
\]
induces a coCartesian equivalence after taking fibers at the object \(a\) of \(A\). The map
\[
\lambda \rho (W^+)_a\to W^+_a
\]
can be written as
\[
\Map^{B^+}(\coslice,W^+) \to W_a^+
\]
and an explicit computation shows that this coincides with the map of Corollary \ref{fibercomputation}, hence is a trivial fibration.
\end{proof}

\begin{sub}
One of the key observations is that the pullback functor preserves homotopy colimits, see also \cite[Lemma 3.5]{short_StrUnStr}.
\end{sub}

\begin{prop}
Let \(f\colon K\to L\) be a map of simplicial sets. Then the induced functor
\[
f^\ast\colon \mSet/L^\sharp \to \mSet/K^\sharp
\]
preserves homotopy colimits.
\end{prop}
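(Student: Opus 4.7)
The plan is to exploit the fact that $f^*$ has both a left and a right adjoint and hence commutes with all small limits and colimits, and then to model homotopy colimits by an explicit functorial construction built solely from operations that $f^*$ preserves. Since $\mSet$ is locally cartesian closed, $f^*$ admits a right adjoint $f_*$ (the dependent product) in addition to its evident left adjoint $f_!$ (postcomposition with $f$). In particular, $f^*$ commutes with coproducts, pushouts, coends, transfinite compositions, and finite products, and preserves monomorphisms (hence cofibrations in the (co)Cartesian model structure, so that every object remains cofibrant after applying $f^*$).

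For any small diagram $D\colon I\to \mSet/L^\sharp$, I would then model $\mathrm{hocolim}_I D$ by the two-sided bar construction
\[
B(\ast, I, D) = \int^{[n]\in\Delta^{\mathrm{op}}} (\Delta^n)^\flat \otimes \Bigl(\coprod_{i_0\to\cdots\to i_n} D(i_0)\Bigr),
\]
where the tensor is a fiber-product construction. Since the (co)Cartesian model structure is $\mSet$-enriched in the sense of satisfying the pushout-product axiom with respect to the generating marked left anodyne extensions of Definition \ref{markedgenerators}, and since every object is cofibrant, this bar construction computes the homotopy colimit. Applying $f^*$ and commuting it past each of the ingredients (coproducts, products, the coend), we obtain a natural isomorphism $f^* B(\ast, I, D) \cong B(\ast, I, f^* D)$. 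The right-hand side is in turn a model for $\mathrm{hocolim}_I f^* D$ in $\mSet/K^\sharp$ by the same reasoning, which yields the desired equivalence $f^* \mathrm{hocolim}_I D \simeq \mathrm{hocolim}_I f^* D$.

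The main obstacle is the verification that the bar construction actually models the homotopy colimit in the (co)Cartesian model structure; once granted, the rest of the argument is purely formal, driven by the double adjointness of $f^*$. This verification is a standard but technical check of the pushout-product axiom for the $\mSet$-self-enrichment, which reduces to closure properties of the generating classes (A1), (A2), (B1), (B2) under cartesian products with arbitrary monomorphisms. These properties are already implicit in Theorem \ref{ezproper} and in \cite{contrakim}, and may be imported without further work.
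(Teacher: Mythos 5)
Your overall strategy --- exploiting that $f^\ast$ is simultaneously a left and a right adjoint (by local cartesian closedness) and therefore commutes strictly with an explicit point-set model of the homotopy colimit --- is viable and genuinely different from the paper's argument, which instead reduces to the case $K\cong\Delta^0$ and replaces the resulting point inclusion by a Cartesian fibration so as to invoke Theorem \ref{pbleftquillen}. Your route treats all $f$ uniformly and avoids properness and fiberwise detection of equivalences altogether, at the price of needing the simplicial model category structure on $\mSet/L^\sharp$. As written, however, it contains one genuine error and one mis-attributed input.

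First, the tensoring in the bar construction must be $(\Delta^n)^\sharp\otimes(-)$, not $(\Delta^n)^\flat\otimes(-)$. With the flat tensoring the unit condition already fails: $X^+\to(\Delta^1)^\flat\times X^+$ is not a coCartesian equivalence, since $\{0\}\to\Delta^1$ is not a Joyal equivalence and no markings are added. Concretely, for $I=(0\to 1)$ and $D$ the constant diagram at $\Delta^0$ over the point, your formula returns $(\Delta^1)^\flat$, which is not equivalent to the homotopy colimit $\Delta^0\simeq(\Delta^1)^\sharp$; the $\sharp$-version returns $(\Delta^1)^\sharp$ and is correct. Second, the fact that the $\sharp$-tensoring satisfies the pushout-product axiom --- which, together with cofibrancy of all objects and Reedy cofibrancy of the bar object, is what makes the corrected bar construction compute the homotopy colimit --- is not ``implicit in Theorem \ref{ezproper}'': that theorem concerns base change along fibrations, not pushout-products. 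The needed statement is that for a monomorphism $M\to N$ of simplicial sets and a trivial cofibration $X^+\to Y^+$ over $S^+$, the map $M^\sharp\times Y^+\cup N^\sharp\times X^+\to N^\sharp\times Y^+$ is again a trivial cofibration; this is a genuine additional input (cf.\ \cite[Corollary 3.1.4.3]{lurie} and the corresponding results in \cite{contrakim}) and must be cited as such. With these two corrections the argument goes through.
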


\begin{proof}
We won't give a full proof here, as we cannot improve on the proof of \cite{short_StrUnStr} or offer a different viewpoint. The idea is to reduce to the case when \(K\cong \Delta^0\) and thus \(f\colon \Delta^0 \to L\) is the specification of an object in \(L\). Then one observes that the pullback \(f^\ast\) is weakly equivalent to pulling back along a marked left fibration. By Theorem \ref{pbleftquillen} pulling back along a marked left fibration is left Quillen, thus it preserves homotopy colimits and the assertion follows.
\end{proof}

\begin{coro}\label{hocolimfibers}
Let \(a\) be an object of \(A\). Then the functor 
\[
\mathbf{R}a^\ast : \mSet/A^\sharp \times B^+\to \mSet/B^+
\]
preserves homotopy colimits.
\end{coro}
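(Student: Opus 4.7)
The plan is to deduce this corollary from the preceding proposition by running essentially the same base-change argument, but relative to \(B^+\). The key observation is that the map \(a\times\mathrm{id}_{B^+}\colon \{a\}\times B^+\to A^\sharp\times B^+\) whose pullback defines \(a^*\) is obtained from \(a\colon\{a\}\to A^\sharp\) by base change along the first projection \(\mathrm{pr}_1\colon A^\sharp\times B^+\to A^\sharp\). Since \(a^*\) is a left adjoint in the locally cartesian closed category \(\mSet\), it preserves strict colimits, so it suffices to check that it sends weak equivalences between cofibrant objects to weak equivalences in \(\mSet/B^+\).

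I would first factor \(a\) as \(\{a\}\xrightarrow{u}(a/A)^\sharp\xrightarrow{p}A^\sharp\), where \(u\) is cellular marked right anodyne and \(p\) is a Cartesian fibration (that is, a marked right fibration). Taking the product with \(B^+\) yields
\[
B^+\xrightarrow{u\times\mathrm{id}_{B^+}}(a/A)^\sharp\times B^+\xrightarrow{p\times\mathrm{id}_{B^+}}A^\sharp\times B^+.
\]
The map \(p\times\mathrm{id}_{B^+}\) remains a Cartesian fibration (being the pullback of \(p\) along \(\mathrm{pr}_1\)), so by Theorem \ref{pbleftquillen} its pullback functor \((p\times\mathrm{id})^*\) is left Quillen for the coCartesian model structure, and in particular preserves homotopy colimits.

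It remains to show that \((u\times\mathrm{id})^*\) preserves homotopy colimits. Since the generators (B1) and (B2) of the cellular marked right anodyne class are manifestly closed under product with any marked simplicial set, the map \(u\times\mathrm{id}_{B^+}\) is again cellular marked right anodyne. Therefore by Theorem \ref{ezproper}, for any marked left fibration \(X^+\) over \((a/A)^\sharp\times B^+\), the pullback inclusion \((u\times\mathrm{id})^*X^+\hookrightarrow X^+\) is marked right anodyne, hence a weak equivalence. Combined with the fact that \((u\times\mathrm{id})^*\) is a left adjoint preserving cofibrations, this yields preservation of homotopy colimits. Composing both steps then shows that \(a^*\) preserves homotopy colimits.

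The main obstacle is a technical one: reconciling the absolute weak equivalences produced by Theorem \ref{ezproper} with the fibered weak equivalences over \(B^+\) that are actually needed. This should follow from the local (fiberwise) detection of weak equivalences in the coCartesian model structure, but requires some care; alternatively, the cleanest route is probably to replicate the proof sketch of the preceding proposition verbatim in the relative setting over \(B^+\), reducing the whole statement to Theorem \ref{pbleftquillen} applied to a suitable marked left fibration over \(A^\sharp\times B^+\).
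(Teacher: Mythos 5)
Your factorization strategy --- write $\{a\}\to A^\sharp$ as a cellular marked anodyne map into a slice followed by a fibration, treat the fibration part with Theorem \ref{pbleftquillen} and the anodyne part with Theorem \ref{ezproper} --- is exactly the argument the paper sketches for the Proposition immediately preceding this Corollary, of which the Corollary is meant to be a direct instance (apply that Proposition to the inclusion $\{a\}\times B\to A\times B$, at least when $B^+=B^\sharp$). Two bookkeeping slips: for $u$ to be cellular marked right anodyne and $p$ a Cartesian fibration you need the slice $(A/a)^\sharp$, not the coslice $(a/A)^\sharp$; and Theorem \ref{pbleftquillen} is stated only for bases of the form $A^\sharp$, so applying it to $p\times\mathrm{id}$ over $A^\sharp\times B^+$ requires the (routine) relative version.

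The obstacle you flag at the end is a genuine gap, not a formality. Theorem \ref{ezproper} makes $(u\times\mathrm{id})^*X^+\hookrightarrow X^+$ marked right anodyne, hence a weak equivalence \emph{over the point}; it need not be a weak equivalence in the coCartesian model structure over $B^+$. (Already $\{1\}\to(\Delta^1)^\sharp$ is marked right anodyne, both source and target are marked left fibrant over $(\Delta^1)^\sharp$, and the map is not a fiberwise equivalence, hence not a coCartesian equivalence over $(\Delta^1)^\sharp$.) So the concluding inference ``left adjoint preserving cofibrations plus these equivalences yields preservation of homotopy colimits'' does not go through as stated. The repair is not to work relative to $B^+$ but to reduce to the absolute case: for $B^+=\Delta^0$ your argument is complete, since there the marked right anodyne comparison map is a weak equivalence, so that $\mathbf{R}a^*$ becomes naturally isomorphic to the composite of the left Quillen functors $p^*$ and the forgetful functor $\mSet/(A/a)^\sharp\to\mSet$. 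The general case then follows by composing with $\mathbf{R}b^*$ for all vertices $b$ of $B$: each such composite is again an instance of the absolute case, each $\mathbf{R}b^*$ preserves homotopy colimits by the absolute case, and these functors jointly reflect isomorphisms because coCartesian equivalences between fibrant objects over $B^+$ are detected fiberwise. This reduction is precisely the ``reduce to $K\cong\Delta^0$'' step of the paper's sketch, whose details the paper defers to the literature.
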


\begin{coro}
The functor \(ev_a\) preserves homotopy colimits.
\end{coro}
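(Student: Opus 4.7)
The plan is to deduce the corollary from the fact that $ev_a$ is a left Quillen functor between the projective Cartesian model structure on $\Fun(A,\mSet/B^+)$ and the Cartesian model structure on $\mSet/B^+$; since left Quillen functors preserve homotopy colimits, this suffices.

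To identify the right adjoint of $ev_a$, I would invoke that $A$ is the nerve of an ordinary small category, so that the Hom-sets $\Hom_A(a',a)$ are discrete. Thus the right Kan extension $a_*\colon \mSet/B^+ \to \Fun(A,\mSet/B^+)$ along $\{a\}\hookrightarrow A$ exists and is given objectwise by the small product $(a_* X)_{a'} = \prod_{\Hom_A(a',a)} X$ in $\mSet/B^+$, and the resulting adjunction $ev_a \dashv a_*$ is the standard evaluation/Kan-extension pair.

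To verify the Quillen property, I would check that $ev_a$ preserves (trivial) cofibrations on generators: the generating projective (trivial) cofibrations have the form $a' \otimes i$ where $i$ is a generating (trivial) cofibration of $\mSet/B^+$, and $ev_a(a' \otimes i) = \coprod_{\Hom_A(a',a)} i$ is a small coproduct of (trivial) cofibrations, hence itself a (trivial) cofibration. By the standard argument, for any small diagram $D\colon I \to \Fun(A,\mSet/B^+)$ with projective cofibrant replacement $QD \to D$, post-composition with $ev_a$ yields a projective cofibrant replacement $ev_a\circ QD \to ev_a \circ D$ in $\Fun(I,\mSet/B^+)$, and since $ev_a$ preserves strict colimits,
\[
ev_a(\operatorname{hocolim}_I D) \;\simeq\; ev_a(\operatorname{colim}_I QD) \;=\; \operatorname{colim}_I(ev_a\circ QD) \;\simeq\; \operatorname{hocolim}_I(ev_a\circ D).
\]
The only nontrivial step is verifying the Quillen property, which reduces to checking the generators as above, so I do not anticipate any serious obstacle.
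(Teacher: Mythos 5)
Your proof is correct, but it takes a genuinely different route from the paper. The paper disposes of this corollary in one line: by the commutative square (\ref{evaluation}) one has $ev_a = a^\ast \circ \lambda$, where $\lambda$ is left Quillen (hence preserves homotopy colimits) and $\mathbf{R}a^\ast$ preserves homotopy colimits by the immediately preceding corollary --- which in turn rests on the properness machinery of Theorem \ref{pbleftquillen}. You instead argue directly that $ev_a$ is left Quillen for the projective model structure: its right adjoint is the right Kan extension $a_\ast$ along $\{a\}\to A$ (given objectwise by products over the discrete sets $\Hom_A(a',a)$, using that $A$ is the nerve of an ordinary category), and $ev_a$ sends the generating projective (trivial) cofibrations $a'\otimes i$ to the coproducts $\coprod_{\Hom_A(a',a)} i$, which are again (trivial) cofibrations since those classes are saturated. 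Your argument is more elementary and more general --- it works for any cofibrantly generated model category in place of $\mSet/B^+$ and does not invoke the properness of coCartesian fibrations or Corollary \ref{hocolimfibers} at all --- whereas the paper's argument is essentially free given what has already been established (and it needs Corollary \ref{hocolimfibers} again in the very next proof anyway). One small point worth making explicit in your final step: since projective weak equivalences are pointwise, $ev_a$ preserves \emph{all} weak equivalences, so $ev_a\circ QD \to ev_a\circ D$ is indeed still a pointwise weak equivalence and no further cofibrancy hypotheses are needed there; with that remark your cofibrant-replacement computation of the homotopy colimit is the standard one and goes through.
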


\begin{proof}
This follows immediately from the commutative diagram (\ref{evaluation}) and the previous Corollary.
\end{proof}

\begin{coro}\label{righthocolim}
The functor \(\rho\) preserves homotopy colimits.
\end{coro}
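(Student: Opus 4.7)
The plan is to detect homotopy colimits in the projective model structure on \(\Fun(A,\mSet/B^+)\) objectwise and thereby reduce the claim to the two preceding corollaries. By construction of the projective model structure, a morphism of functors \(F\to G\) is a weak equivalence if and only if \(ev_a(F)\to ev_a(G)\) is a coCartesian equivalence in \(\mSet/B^+\) for every object \(a\in A\). Since homotopy colimits are determined up to weak equivalence by the universal property of their mapping-out functor, this reduces the claim to showing that for each \(a\in A\) the composite \(ev_a\circ \mathbf R\rho\) preserves homotopy colimits.

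Next, I would invoke Proposition \ref{mate}: for every fibrant object \(W^+\) of \(\mSet/A^\sharp \times B^+\), the natural map \(ev_a\rho(W^+)\to a^\ast W^+\) is a coCartesian equivalence. Since \(\rho\) is right Quillen and both \(ev_a\) and \(a^\ast\) preserve coCartesian equivalences between fibrant objects, passing to right derived functors identifies \(ev_a\circ \mathbf R\rho\) with \(\mathbf R a^\ast\) as functors on the homotopy category of \(\mSet/A^\sharp \times B^+\). Corollary \ref{hocolimfibers} then asserts precisely that \(\mathbf R a^\ast\) preserves homotopy colimits, which concludes the argument.

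I do not anticipate a serious obstacle here. The first step is a structural feature of the projective model structure, and the remaining steps directly cite results already established in the section. The only technical point deserving mild care is that the comparison of Proposition \ref{mate} is natural in the variable \(W^+\), so that the equivalences \(ev_a\mathbf R\rho(-)\simeq \mathbf R a^\ast(-)\) assemble into an honest natural isomorphism of derived functors between homotopy categories, and hence genuinely transport the homotopy-colimit preservation of \(\mathbf R a^\ast\) back to \(ev_a\circ \mathbf R\rho\) for each \(a\).
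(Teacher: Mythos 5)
Your proposal is correct and follows essentially the same route as the paper: detect the comparison map objectwise via the \(ev_a\) (which preserve homotopy colimits by the preceding corollary), identify \(\mathbf{R}ev_a\circ\mathbf{R}\rho\) with \(\mathbf{R}a^\ast\) via Proposition \ref{mate}, and conclude from Corollary \ref{hocolimfibers}. The only cosmetic difference is that you justify the objectwise detection/computation of homotopy colimits by general properties of the projective model structure, where the paper cites its corollary that \(ev_a = a^\ast\lambda\) preserves homotopy colimits.
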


\begin{proof}
We show that the map
\[
hocolim_I \mathbf{R}\rho \to \mathbf{R}\rho\ hocolim_I
\]
is an equivalence. Since equivalences are computed point wise, it is enough to show that
\[
\mathbf{R}ev_a hocolim_I \mathbf{R}\rho \to \mathbf{R}ev_a \mathbf{R}\rho\ hocolim_I
\]
is an equivalence. By the previous corollary \(ev_a\) commutes with homotopy colimits and by Proposition \ref{mate} we have
\[
\mathbf{R}ev_a \mathbf{R}\rho \simeq \mathbf{R}a^\ast
\]
Thus the assertion follows from Proposition \ref{hocolimfibers}.
\end{proof}

\begin{lem}
Let \(X^\natural\to A^\sharp\) be a cartesian fibration and \(Y^+\to B^\sharp\) be a map. Let \(Z^\natural\to A^\sharp \times B^\sharp\) be a cartesian fibration. Then a map
\[
\begin{tikzcd}
X^\natural \times Y^+\ar{rr}{f}\drar & & Z^\natural \dlar\\
& A^\sharp \times B^\sharp &
\end{tikzcd}
\]
is a cartesian equivalence if and only if for all points \(a\) of \(A\) the map
\[
\begin{tikzcd}
X^\natural_a \times Y^+\ar{rr}{f_a}\drar & & Z^\natural_a \dlar\\
& B^\sharp &
\end{tikzcd}
\]
is a cartesian equivalence.
\end{lem}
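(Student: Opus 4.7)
The plan is to reduce to a fiberwise-detection criterion for Cartesian equivalences between Cartesian fibrations. First I would factor \(f\) in the Cartesian model structure on \(\mSet/A^\sharp \times B^\sharp\) as
\[
X^\natural \times Y^+ \xrightarrow{j} W^\natural \xrightarrow{g} Z^\natural
\]
with \(j\) a trivial cofibration and \(g\) a Cartesian fibration between fibrant objects. By 2-out-of-3, \(f\) is a Cartesian equivalence if and only if \(g\) is one, and the goal is to transport this equivalence back and forth through pullback along \(\{a\}\times B^\sharp \to A^\sharp\times B^\sharp\).

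The engine of both implications is the following claim: pullback along \(\{a\}\times B^\sharp \to A^\sharp\times B^\sharp\) sends \(j\) to a trivial cofibration \(j_a\colon X^\natural_a \times Y^+ \to W^\natural_a\) in \(\mSet/B^\sharp\). To prove this, I would factor the map \(\{a\}\to A^\sharp\) as the cellular marked right anodyne inclusion \(\{a\}\to (A/a)^\sharp\) followed by the Cartesian fibration \((A/a)^\sharp\to A^\sharp\); pullback along the second factor is right Quillen for the Cartesian model structure by the dual of Theorem \ref{pbleftquillen}, and pullback of trivial cofibrations with Cartesian-fibrant target along the first factor is again a Cartesian equivalence by Theorem \ref{ezproper}. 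Granting this, \(g_a\) is a Cartesian equivalence in \(\mSet/B^\sharp\) if and only if \(f_a\) is.

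With that in hand, both directions become fiberwise detection of weak equivalences between Cartesian fibrations. In the forward direction: if \(f\) is a Cartesian equivalence then \(g\) is, and taking fibers at \(a\) (which preserves fibrations and, by the paragraph above, trivial cofibrations when the target is a Cartesian fibration) yields \(g_a\) a Cartesian equivalence; together with \(j_a\) a trivial cofibration, this gives \(f_a\) a Cartesian equivalence. Conversely, if every \(f_a\) is a Cartesian equivalence, then every \(g_a\) is, so \(g_a\) is a fibration between Cartesian-fibrant objects over \(B^\sharp\) whose fibers over each \(b\in B\) are equivalences of Kan complexes. Iterating, \(g\) itself is a fibration between Cartesian-fibrant objects over \(A^\sharp\times B^\sharp\) whose fibers over every vertex \((a,b)\) are equivalences of Kan complexes, and this is precisely the condition characterizing Cartesian equivalences between fibrant objects; hence \(f\) is a Cartesian equivalence.

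The main obstacle is the claim about preservation of trivial cofibrations by pullback to the fiber over \(a\). This is essentially a right-properness statement for the Cartesian model structure, and the cleanest route I see is the factorization \(\{a\}\to (A/a)^\sharp\to A^\sharp\) outlined above, invoking Theorem \ref{pbleftquillen} and Theorem \ref{ezproper}; the remainder is then a routine application of the fiberwise detection of weak equivalences, applied first over \(A^\sharp\times B^\sharp\) and then over \(B^\sharp\).
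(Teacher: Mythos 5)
Your overall strategy---replace the source by a fibrant object and then detect the equivalence fiberwise between Cartesian fibrations---is the right shape, but the step you correctly identify as ``the engine of both implications'' is exactly where the content of the lemma sits, and your justification for it does not go through. You want to know that pulling back the trivial cofibration \(j\colon X^\natural\times Y^+\to W^\natural\) along \(\{a\}\times B^\sharp\to A^\sharp\times B^\sharp\) again gives a Cartesian equivalence, and you propose to get this from the factorization \(\{a\}\to (A/a)^\sharp\to A^\sharp\). But the dual of Theorem \ref{pbleftquillen} says that pullback along a \emph{coCartesian} fibration (a marked left fibration) is left Quillen for the Cartesian model structure, whereas \((A/a)^\sharp\to A^\sharp\) is a \emph{Cartesian} fibration; the same-handed statement you need is false in general (Cartesian fibrations are smooth, not proper: for the Cartesian fibration over \(\Delta^1\) classifying a cospan \(x\to \ast\leftarrow y\), the inclusion of the fiber over \(1\) is not a Cartesian equivalence). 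The appeal to Theorem \ref{ezproper} for the first factor has the same handedness problem---that theorem pairs marked left fibrations with cellular marked \emph{right} anodyne maps (or the dual), never the same-handed combination---and in any case it only applies to fibrant objects, while the source \(X^\natural\times Y^+\) of \(j\) is not a Cartesian fibration over \(A^\sharp\times B^\sharp\). (Two smaller slips: you write ``right Quillen'' where you need ``left Quillen'' to transport trivial cofibrations, and the fibers of Cartesian fibrations over vertices are \(\infty\)-categories, not Kan complexes.)

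The way out, and what the paper does, is to choose the fibrant replacement so that its compatibility with taking fibers at \(a\) is automatic rather than something to be proved: factor only \(Y^+\to W^\natural\to B^\sharp\) as a marked right anodyne extension followed by a Cartesian fibration. Then \(X^\natural\times Y^+\to X^\natural\times W^\natural\) is marked right anodyne with fibrant target over \(A^\sharp\times B^\sharp\), and its fiber at \(a\) is \(X^\natural_a\times Y^+\to X^\natural_a\times W^\natural\), which is again marked right anodyne because the replacement happens purely in the \(B\)-direction. Lifting \(f\) through this replacement to a map \(g\colon X^\natural\times W^\natural\to Z^\natural\) of Cartesian fibrations, one reduces both sides of the claimed equivalence to \(g\) and \(g_a\) by two-out-of-three, and concludes by fiberwise detection of Cartesian equivalences between fibrant objects. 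A generic fibrant replacement of \(X^\natural\times Y^+\) over \(A^\sharp\times B^\sharp\), as in your plan, destroys the product structure that makes this compatibility free.
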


\begin{proof}
Choose a factorization
\[
Y^+\to W^\natural \to B^\sharp
\]
into a marked right anodyne extension followed by a cartesian fibration. We get an induced factorization
\[
X^\natural \times Y^+\to X^\natural \times W^\natural\to A^\sharp \times B^\sharp
\]
into marked right anodyne followed by cartesian fibration. We find a solution to the lifting problem
\[
\begin{tikzcd}
X^\natural \times Y^+\rar{f} \dar & Z^\natural \dar\\
X^\natural \times W^\natural \rar\urar[dashed]{g} & A^\sharp \times B^\sharp
\end{tikzcd}
\]
since the left hand side is marked right anodyne and the right hand side is a cartesian fibration by assumption. Thus, the map \(f\) is a cartesian equivalence if and only if the map \(g\) is. Now let us take fibers at the inclusion
\[
\{a\}\times B^\sharp \to A^\sharp \times B^\sharp. 
\]
We obtain
\[
\begin{tikzcd}
X^\natural_a \times Y^+\rar{f_a} \dar & Z^\natural_a \dar\\
X^\natural_a \times W^\natural \rar\urar{g_a} & B^\sharp
\end{tikzcd}
\]
Again, the left hand side is marked right anodyne hence \(f_a\) is a cartesian equivalence if and only if \(g_a\) is a cartesian equivalence. It thus suffices to show that \(g\) is a cartesian equivalence if and only if \(g_a\) is. This now follows from the fact that cartesian equivalences between cartesian fibrations are detected pointwise.
\end{proof}

\begin{prop}\label{derivedunit}
A map \(\lambda(a\otimes X)\to W^+\) in \(\mSet/A^\sharp \times B^+\) is a weak equivalence if and only if the adjoint map \(a\otimes X\to \rho W^+\) is.
\end{prop}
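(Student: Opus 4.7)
The plan is to show that both conditions in the biconditional reduce to the same pointwise criterion on $A$, namely that for every object $a'\in A$ the map $(a\otimes X)(a')\to W^+_{a'}$ is a weak equivalence in $\mSet/B^+$.

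First I would reduce to the case where $W^+$ is fibrant, which does not affect either side of the biconditional since the adjunction is natural and $\rho$ is right Quillen. Assuming $W^+$ is fibrant, I attack the left-hand side using Example \ref{generators}: it identifies $\lambda(a\otimes X)$ with $\coslice\times X$, a product of exactly the shape considered in the lemma immediately preceding this proposition. That lemma then shows that $f\colon \coslice\times X\to W^+$ is a weak equivalence over $A^\sharp\times B^+$ if and only if, for each $a'\in A$, the induced map on fibers $(\coslice)_{a'}\times X\to W^+_{a'}$ is a weak equivalence in $\mSet/B^+$. Since the underlying map $a/A\to A$ is a left fibration with discrete fiber $\Hom_A(a,a')$ at $a'$, this source is canonically $(a\otimes X)(a')$, so the left-hand condition becomes exactly the pointwise criterion above.

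For the right-hand side, weak equivalences in the projective model structure on $\Fun(A,\mSet/B^+)$ are detected pointwise. Evaluating $\tilde{f}$ at $a'$ yields a map $(a\otimes X)(a')\to(\rho W^+)(a')=\Map^{B^+}((a'/A)^\sharp,W^+)$, and Corollary \ref{fibercomputation}, applied to the marked left anodyne inclusion $\{a'\}\to(a'/A)^\sharp$, identifies the target canonically with $W^+_{a'}$. A mate computation shows that, under this identification, $\tilde{f}_{a'}$ agrees with the fiberwise restriction of $f$ along $\{a'\}\times B^+\hookrightarrow A^\sharp\times B^+$, so the right-hand condition likewise reduces to the same pointwise criterion and the biconditional follows.

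The step I expect to be the main obstacle is the mate computation in the preceding paragraph: one must verify that, after transporting along the weak equivalence of Corollary \ref{fibercomputation}, the component $\tilde{f}_{a'}$ really coincides with the map on strict fibers induced by $f$. This amounts to an unwinding of the definitions of $\lambda$, $\rho$, and the universal property defining $\Map^{B^+}$, with the delicate part being that the chain of identifications must be natural in $a'$ in a compatible way so as to faithfully translate pointwise information across the adjunction.
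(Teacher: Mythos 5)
Your proposal is correct and follows essentially the same route as the paper: identify $\lambda(a\otimes X)$ with the product $\coslice\times X$, apply the preceding lemma to reduce the left-hand condition to fibers over points $a'$ of $A$, and use pointwise detection in the projective model structure together with Corollary \ref{fibercomputation} to reduce the right-hand condition to the same fiberwise maps. The compatibility check you flag as the main obstacle is exactly the step the paper leaves implicit ("this is equivalent to the first case"), so your account is, if anything, more explicit about what remains to be unwound.
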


\begin{proof}
It suffices to show that for an object \(a\) of \(A\), a map \(X^+\to B^\sharp\) and a cartesian fibration \(Z^\natural\to A^\sharp \times B^\sharp\), a map 
\[
\begin{tikzcd}
\lambda(a\otimes p) \ar{rr}\drar & &  Z^\natural\dlar\\
& A^\sharp \times B^\sharp &
\end{tikzcd}
\]
is a cartesian equivalence if and only if the adjoint map
\[
\begin{tikzcd}
a\otimes p \ar{rr}& & \rho(Z^\natural)
\end{tikzcd}
\]
is a pointwise cartesian equivalence. In the first case we have the map
\[
\begin{tikzcd}
A/a^\sharp \times X^+\drar \ar{rr} & & Z^\natural\dlar\\
 & A^\sharp\times B^\sharp &
\end{tikzcd}
\]
which is a cartesian equivalence if and only if for any point \(a'\) of \(A\) the induced map
\[
\begin{tikzcd}
\Hom(a',a) \times X^+\drar \ar{rr} & & Z_{a'}^\natural\dlar\\
 & B^\sharp &
\end{tikzcd}
\]
is a cartesian equivalence. In the second case, we have a pointwise cartesian equivalence if and only if for any point \(a'\) of \(A\) the induced map
\[
\begin{tikzcd}
\Hom(a',a)\times X^+\ar{rr}\drar & & \Map_A(A/a'^\sharp,Z^\natural)\dlar\\
& B^\sharp &
\end{tikzcd}
\]
is a cartesian equivalence. Since we have a cartesian equivalence \[
\Map_A(A/a'^\sharp,Z^\natural)\simeq  Z^\natural_{a'}
\]
over \(B^\sharp\), this is equivalent to the first case.
\end{proof}

\begin{proof}[Proof of Theorem \ref{rectificationeq}]
Since weak equivalences between Cartesian fibrations are computed fiberwise, it follows from Proposition \ref{mate} that \(\rho\) reflects weak equivalences between fibrant objects. It thus suffices to show that the derived unit is a weak equivalence. It follows from Proposition \ref{derivedunit} that the derived unit is a weak equivalence for each object of the form \(a\otimes X\). Since any functor is a homotopy colimit of objects of this form and the functor \(\rho\) preserves homotopy colimits by Corollary \ref{righthocolim}, it follows that the derived unit is in fact a weak equivalence.
\end{proof}

\bibliographystyle{alpha}
\bibliography{directreferences}

\end{document}